\definecolor{Gray}{gray}{0.9}                            
\theoremstyle{plain} 
\newtheorem{proposition}{Proposition}[section] 
\newtheorem{theorem}[proposition]{Theorem} 
\newtheorem{lemma}[proposition]{Lemma} 
\newtheorem{corollary}[proposition]{Corollary} 
\newtheorem*{maintheorem}{Theorem} 
\theoremstyle{definition} 
\newtheorem{definition}[proposition]{Definition} 
\newtheorem{example}[proposition]{Example}
\theoremstyle{remark} 
\newtheorem{remark}[proposition]{Remark}
\renewcommand{\O}{{\mathcal{O}}}   
\newcommand{\QQ}{{\mathbb{Q}}} 
\newcommand{\ZZ}{{\mathbf{Z}}} 
\newcommand{\PP}{{\mathbb{P}}}          
\newcommand{\GG}{{\mathbb{G}}}
\newcommand{\cF}{{\mathcal{F}}}
\newcommand{\cH}{{\mathcal{H}}}
\newcommand{\cN}{{\mathcal{N}}}
\newcommand{\YY}{{\mathbb{Y}^5}}
\providecommand{\rk}{\mathop{\rm rk}} 
\providecommand{\Pic}{\mathop{\rm Pic}}
\numberwithin{equation}{section}
\title[Constructions of K3 surfaces and unirational Noether--Lefschetz divisors]{Explicit constructions of K3 surfaces and unirational Noether--Lefschetz divisors}
\author[M. Hoff]{Michael Hoff} 
\address{Universit\"at des Saarlandes, Campus E2 4, D-66123 Saarbr\"ucken, Germany}
\email{\href{mailto:hahn@math.uni-sb.de}{hahn@math.uni-sb.de}} 
\author[G. Staglian\`o]{Giovanni Staglian\`o} 
\address{Dipartimento di Matematica e Informatica, Universit\`{a} degli Studi di Catania, Viale A. Doria 5, 95125 Catania, Italy}
\email{\href{mailto:giovanni.stagliano@unict.it}{giovanni.stagliano@unict.it}} 
\date{\today} 
\thanks{
M.H. was partially supported by the Deutsche Forschungsgemeinschaft (DFG, German Research Foundation) - Project-ID 286237555 - TRR 195. 
}
\begin{document}
\begin{abstract} 
We provide methods to construct explicit examples of $K3$ surfaces. This leads to unirational constructions of Noether--Lefschetz divisors inside the moduli space of $K3$ surfaces of genus $g$. We implement Mukai's unirationality construction of the moduli spaces of $K3$ surfaces of genus $g\in\{6,\dots, 10, 12\}$, and we also present a new constructive proof of the unirationality of the moduli space of $K3$ surfaces of genus $11$. 
Furthermore, we show the existence of three unirational hypersurfaces in any moduli space of $K3$ surfaces of genus $g$.  
\end{abstract}

\maketitle

\section*{Introduction} 

The moduli space $\cF_g$ of (quasi-)polarized $K3$ surfaces of genus $g$ is a fundamental object of interest in algebraic geometry. In order to understand its geometry one has to know/compute its Kodaira dimension. There are many striking results in this direction. On the one hand the unirationality of these moduli spaces is known for small genus (see e.g. \cite{mukaiSugaku, FV18, FV20}) and on the other hand for large genus the space is of general type (see e.g. \cite{GHS07}). We give a complete list of known results in Section~\ref{knownResults}. One says that the moduli space $\cF_g$ is unirational if there exists a dominant rational map $\PP^n\dashrightarrow \cF_g$. 
In this paper, we are more interested in the \emph{explicit unirationality} of $\mathcal{F}_g$,
meaning that the proof of unirationality provides an explicit algorithm which can be implemented 
in a computer algebra program such as \emph{Macaulay2} \cite{macaulay2}; so, in particular,
we shall be able to determine the equations of the general member of $\mathcal{F}_g$.

The natural next step to go is to ask for the Kodaira dimension of lattice-polarized $K3$ surfaces. A main goal of this article is to provide explicit unirational constructions of $K3$ surfaces of Picard rank $2$ (or equivalently,  Noether--Lefschetz divisors inside $\cF_g$). We provide a \emph{Macaulay2} package, named \emph{K3s} \cite{K3sSource}, where we have implemented all our contructions (see Section~\ref{comput}). 

We recall the content of the article and some applications. 
Due to the work of Mukai (see \cite{Muk88,mukai-biregularclassification,MukaiFano3folds}),
we know that the $19$-dimensional moduli space $\mathcal{F}_g$ of (quasi-)polarized $K3$ surfaces of genus $g$ (and degree $2g-2$ given by a quasi-polarization $L$) is explicitly unirational for $g\leq 12$ and $g\neq 11$. 
Using Mukai's descriptions, we will construct such $K3$ surfaces containing a further curve class $C$ of small degree and/or small genus, giving a $K3$ surface with Picard lattice of rank at least $2$ (see also Section~\ref{overview}). Let $S$ be such a $K3$ surface whose Picard lattice contains a (very) ample class of high self intersection given as a linear combination of the quasi-polarization $L$ and $C$, e.g. $H = aL + bC$ for $a,b\in\mathbb{Z}$. In our \emph{Macaulay2} package \emph{K3s}, we provide a function computing the projective model $S\to \PP(H^0(S,H)^*)$. Examples of projective models of $K3$ surfaces of genus $14,22,44$ are presented in our notes. As an application, we find an unirational Noether--Lefschetz divisor in $\cF_{44}$ which is itself of general type (see Subsection~\ref{K3genus44}). Beyond that we show the following result in Section \ref{K3OfLowCliffordIndex}.

\begin{maintheorem}
There are at least three unirational Noether--Lefschetz divisors in the moduli space $\cF_g$ for any $g\ge 3$.
\end{maintheorem}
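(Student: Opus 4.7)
The plan is to exhibit three non-isomorphic rank-$2$ sublattices $\Lambda_1, \Lambda_2, \Lambda_3$ of the K3 lattice, each containing a polarization class $H$ with $H^2 = 2g-2$, and then show that for each $i$ the Noether--Lefschetz divisor $\mathcal{D}_{\Lambda_i}\subset\mathcal{F}_g$ parametrizing $\Lambda_i$--polarized K3 surfaces is unirational. Distinctness of the three divisors will follow from the non-isomorphism of the lattices, detectable by the discriminants or by the intersection data $C_i\cdot H$ and $C_i^2$.

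For the choice of lattices, the natural candidates are $\Lambda_i = \langle H, C_i\rangle$ where each $C_i$ is an effective class of small self-intersection --- e.g. a smooth rational curve ($C_i^2 = -2$) or an elliptic curve ($C_i^2 = 0$) of small intersection with $H$. Three such triples exist for any $g\ge 3$, because for each choice of $(C_i^2, C_i\cdot H)$ with sufficiently small parameters the Gram matrix of $\Lambda_i$ has negative discriminant and embeds primitively into the K3 lattice, so $\mathcal{D}_{\Lambda_i}$ is a non-empty hypersurface by the description of the Noether--Lefschetz locus.

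The core step is to parametrize each $\mathcal{D}_{\Lambda_i}$ by a rational variety. My plan here is to \emph{invert} the construction: fix a smooth curve $C$ of the prescribed type (rational or elliptic) of the prescribed degree in a suitable ambient projective model, and parametrize the K3 surfaces of polarization $H$ containing $C$. The Hilbert scheme of such curves $C$ is rational (rational normal curves, or linear projections thereof, and normal elliptic curves in projective space have rational Hilbert schemes), and the condition that a K3 of the given type contain $C$ cuts out a linear subspace of the linear system defining the K3. The total parameter space is thus a projective bundle over a rational base, hence rational. A Bertini-type argument guarantees smoothness of the general K3, while a direct lattice/monodromy computation (using that $C$ deforms in the family) shows the general member has Picard lattice exactly $\Lambda_i$.

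The main obstacle is producing a suitable projective ambient model \emph{uniformly for every} $g\ge 3$. For the Mukai genera $g\le 12$ with $g\neq 11$, the classical models reviewed in the paper make each $\Lambda_i$ construction essentially a linear algebra exercise; but for arbitrary $g$ one has to rely on the general constructions developed in the present paper (embeddings in rational scrolls, projections and degenerations of K3 surfaces of higher genus) and check that the prescribed curve class $C_i$ can be realised inside these ambient models. If one of the three chosen classes fails to be realised for a specific $g$, the strategy is to substitute another lattice of the same flavour (e.g. changing $C_i\cdot H$ by one) until three distinct unirational divisors are obtained; the flexibility in the choice of $(C_i^2, C_i\cdot H)$ gives more than enough room to do so.
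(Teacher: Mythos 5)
There is a genuine gap at the core step, and it is exactly the step where the content of the theorem lies. Your mechanism for unirationality --- fix the curve $C$ in an ambient model, observe that ``the condition that a K3 of the given type contain $C$ cuts out a linear subspace of the linear system defining the K3,'' and conclude that the parameter space is a projective bundle over a rational base --- presupposes that the general K3 surface of genus $g$ is defined by a linear system (or is a complete intersection / linear section) inside some fixed rational ambient variety. That is true only for $g\le 5$ (complete intersections) and for the Mukai genera $g\le 12$, $g\ne 11$; for large $g$ no such model exists, and indeed cannot in this naive form since $\cF_g$ is of general type for $g$ large. Your closing paragraph concedes this (``the main obstacle is producing a suitable projective ambient model uniformly for every $g\ge 3$'') and the proposed remedy --- fall back on ``the general constructions developed in the present paper'' and, if a class fails to be realised, ``substitute another lattice until three distinct unirational divisors are obtained'' --- is not an argument but a placeholder for one.

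The two missing ideas, which are how the paper actually proves the statement, are the following. First, the three lattices must be chosen with an \emph{isotropic} class: the paper takes $\Lambda_g^{d,0}$ for $d=3,4,5$, i.e.\ $K3$ surfaces carrying an elliptic pencil $|E|$ of degree $d$ (tri-, tetra- and pentagonal $K3$ surfaces). The point of this choice is that replacing the polarization $L$ by $L-kE$ identifies $\cF^{\Lambda_g^{d,0}}$ with $\cF^{\Lambda_{g'}^{d,0}}$ for a bounded $g'$, so the assertion ``for any $g\ge 3$'' reduces to a \emph{finite} list of low-genus cases; your $(-2)$-classes do not admit such a reduction. Second, in those finitely many cases the elliptic pencil forces $S$ to lie on a rational normal scroll of dimension $d\le 5$, hence in codimension at most $3$ in a rational variety, where the structure theorems (divisor, Hilbert--Burch, Buchsbaum--Eisenbud Pfaffians) give a parametrization by matrices with linearly varying entries, whence unirationality (Johnsen--Knutsen, \cite[Section 9]{JK04}). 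You mention scrolls in passing, but without the reduction via $L\mapsto L-kE$ and without the structure theorems the argument does not close for arbitrary $g$.
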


In a forthcoming work of the first named author with G. Mezzedimi, we prove the existence of a fourth unirational divisors in the moduli space $\cF_g$ for any $g\ge 3$. It is natural to ask for the number $n(g)$ of unirational divisors in $\cF_g$ depending on the genus $g$, for example we construct $7$ unirational divisors in $\cF_{44}$.  We think that the number $n(g)$ is unbounded for $g\to \infty$. But even more could be true.  Are there infinitely many unirational divisors in the moduli space of $K3$ surfaces of genus $g$?

In a work in progress of A. Auel, M. Bolognesi and the first named author, we apply the explicit construction presented in this article to study codimension $2$ subvarieties inside the moduli space of cubic fourfolds and their relation to Noether--Lefschetz divisors of their associated $K3$ surfaces. 

Beside the implementation of Mukai's unirationality constructions, 
our second major result is an explicit construction of $K3$ surfaces of genus $11$. 
Mukai showed in \cite{Mukg11} that $\mathcal{F}_g$ is also unirational for $g=11$, but his proof does not seem to lead in an obvious way to an algorithm. In Section \ref{K3genus11} we fill this gap by showing how to determine equations for a general K3 surface of genus $11$.
\begin{maintheorem}
 The moduli space $\cF_{11}$ of $K3$ surfaces of genus $11$ is explicitly unirational. 
\end{maintheorem}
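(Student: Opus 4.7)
The plan is to follow Mukai's guiding philosophy: describe $\mathcal{F}_{11}$ birationally via K3 surfaces carrying an extra curve class $C$ of small degree or genus, so that the resulting Picard-rank-$2$ pairs are parameterized by an explicitly rational variety, and then degenerate to recover a general member. For $g\le 12$ with $g\ne 11$, Mukai presents a general K3 as a transverse linear section of a fixed rational homogeneous variety; the difficulty at $g = 11$ is that no such ambient model exists, which is exactly why Mukai's own proof proceeds through Brill--Noether loci on curves and does not readily yield equations.

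The construction would proceed in three steps. First, I would fix an auxiliary divisor class $C$ on a K3 surface of genus $11$ --- natural candidates being a $(-2)$-curve, an elliptic pencil of small degree, or a low-genus curve --- chosen so that $\langle L, C\rangle$ has small discriminant and so that the pair $(S,C)$ admits a concrete projective realization with $S$ reconstructible from $C$ together with elementary linear data. Second, I would realize such pairs explicitly: start from $C$ embedded by a rationally parameterizable linear system in a suitable ambient variety $X$ (a projective space, a scroll, or a small Grassmannian), and then obtain $S$ either as a component of a linkage of $C$ inside $X$ against a second auxiliary subscheme, or as the degeneracy locus of a map between syzygy bundles of $C$. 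Both routines reduce to ideal-saturation and sheaf-cohomology computations that fit into the \emph{K3s} package built earlier in the paper. Third, I would check that the induced map from the parameter space of such data to $\mathcal{F}_{11}$ is dominant, by combining a dimension count that matches the $19$-dimensional target modulo finite fibers, a smoothness and polarization-degree verification on a single computed example, and a Picard-rank specialization argument ensuring that the generic member of the family has $\rho = 1$.

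The main obstacle is the second step: identifying an auxiliary curve class $C$ whose accompanying data is simultaneously \emph{rigid enough} to determine $S$ generically and \emph{simple enough} to be written down rationally. This is precisely where the case $g=11$ resists the uniform Mukai pattern; success likely requires choosing $C$ so that the associated Picard-rank-$2$ lattice singles out a preferred ambient variety in which linkage or syzygy constructions produce K3 surfaces whose hyperplane class has the correct degree $2g-2 = 20$. Once such a $C$ is in hand, the remaining ingredients --- ideal saturations, smoothness checks, and Picard-rank control by degeneration --- become standard computations entirely analogous to those performed for the other genera treated in the paper.
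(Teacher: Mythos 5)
Your overall plan has a fatal flaw at its core: you propose to parameterize pairs $(S,C)$ where $C$ is an auxiliary divisor class making $\langle L, C\rangle$ a rank-$2$ lattice of small discriminant, and then to ``degenerate to recover a general member'' of $\mathcal{F}_{11}$. This cannot work. Any family of K3 surfaces all of whose members carry the extra class $C$ maps into the Noether--Lefschetz divisor $\mathcal{F}^{\Lambda}\subset\mathcal{F}_{11}$ associated to $\Lambda=\langle L,C\rangle$, which is $18$-dimensional, whereas $\mathcal{F}_{11}$ is $19$-dimensional; the induced map can therefore never be dominant. The specialization argument runs in the wrong direction: the Picard number is upper semicontinuous in families (it jumps up under specialization, never down), so from a family whose members all have $\rho\ge 2$ you cannot reach the general point of $\mathcal{F}_{11}$, which has $\rho=1$. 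Your third step, ``a Picard-rank specialization argument ensuring that the generic member of the family has $\rho=1$,'' directly contradicts your first step, which builds the class $C$ into every member. This is exactly why constructions of this type in the paper only ever yield unirational \emph{divisors} $\mathcal{F}^{\Lambda^{d,n}_{g}}\subset\mathcal{F}_{g}$, never the full moduli space.

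The paper's actual proof for $g=11$ takes an entirely different, non-lattice-theoretic route, via Gushel--Mukai fourfolds. One starts from an explicit unirational $25$-dimensional family $\mathcal{S}$ of rational surfaces of degree $9$ and sectional genus $2$ inside $\mathbb{Y}^5=\mathbb{G}(1,4)\cap\mathbb{P}^8$; a general quadratic section $X$ of $\mathbb{Y}^5$ through such a surface $S$ is a rational Gushel--Mukai fourfold lying in a Noether--Lefschetz divisor $\mathcal{M}_{20}$ of the moduli space of GM fourfolds. The birational map $X\dashrightarrow\mathbb{P}^4$ defined by cubics double along $S$ has inverse given by degree-$9$ hypersurfaces double along a surface $U\subset\mathbb{P}^4$, and $U$ is a (triple followed by simple) projection of a minimal K3 surface $\tilde{U}\subset\mathbb{P}^{11}$ of degree $20$ and genus $11$ --- the Hodge-theoretically associated K3 surface of $X$. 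Since the $23$-dimensional unirational incidence variety of pairs $(S,X)$ dominates $\mathcal{F}_{11}$ via $(S,X)\mapsto\tilde{U}$, explicit unirationality follows, with the equations of $\tilde{U}$ recovered by computing the base locus of the inverse map and the exceptional curves $L$ and $C$ on $U$. The crucial point your proposal misses is that the auxiliary structure must live on the \emph{fourfold} side (the surface $S\subset X$), not as an extra divisor class on the K3 itself, so that no Picard-rank obstruction arises on the target.
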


\subsection*{Acknowledgements}
We would like to thank Sandro Verra for many helpful comments and suggestions.  We also thank Laurent Manivel for explaining us the rational parametrization of the Mukai variety of genus $10$.

\section{Moduli spaces and Mukai models}

\subsection{Moduli spaces of lattice polarized $K3$ surfaces}
Let $(S,L)$ be a quasi-polarized $K3$ surface of genus $g$ (that is, $L$ is pseudo-ample and $L^2 = 2g-2$) and let $\cF_g$ be the $19$-dimensional moduli space of quasi-polarized $K3$ surfaces of genus $g$ (in particular,  the projective model of a quasi-polarized $K3$ surface can have at most isolated singularities). There are countably many Noether--Lefschetz divisors in $\cF_g$ describing $K3$ surfaces of Picard-rank $\ge 2$. Such a divisor is birational to the moduli space 
$\cF^{\Lambda^{d,n}_{g}}$
of lattice-polarized $K3$ surfaces  for some rank $2$ lattice $\Lambda^{d,n}_{g}$ with the intersection matrix 
$$ 
\begin{pmatrix}2g-2 & d  \\ d & n \end{pmatrix}
$$
with respect to a basis $\{h_1,h_2\}$. 
The moduli space $\cF^{\Lambda^{d,n}_{g}}$ of lattice polarized $K3$ surfaces parametrizes pairs $(S,\varphi)$ consisting of a $K3$ surface and a primitive lattice embedding $\varphi: {\Lambda^{d,n}_{g}}\to \Pic(S)$ such that  $\varphi(\Lambda)$ contains the pseudo-ample class. It is a quasi projective irreducible $18\  (= 20 - \rk(\Lambda))$-dimensional variety by \cite{Do96}. There exists a non-empty open subset of $\cF^{\Lambda^{d,n}_{g}}$ such that $L:=\varphi(h_1)$ is pseudo-ample and hence, inducing a quasi-polarization of genus $g$. By abuse of notation, we write $\cF^{\Lambda^{d,n}_{g}}\subset \cF_g$.

\subsection{Birational descriptions of Mukai models}

We denote by $\Sigma_g$ the Mukai model of genus $g\in\{6,7,8,9,10,12\}$,  these are (homogeneous) varieties with a canonical curve section. In the following table we recall their definitions and their rational parametrizations.  We will write $\GG(r,n)$ for the Grassmannian parametrizing $r$-planes in $\PP^n$. 

\begin{remark} 
Note that the varieties $\Sigma_6$ and $\Sigma_{12}$ are not homogeneous varieties. The Fano threefold $\Sigma_{12}$ is a complete intersection with respect to the rank $9$ vector bundle $\Lambda^2\mathcal{S}^{\oplus 3}$ on $\GG(2,6)$, where $\mathcal{S}$ is the dual of the universal subbundle on $\GG(2,6)$. 
\end{remark}

\begin{table}
\begin{tabular}{|c|c| l |c| l |}
\hline
$g$ & $\Sigma_g$ &  & $\dim(\Sigma_g)$ & Parametrization \\ \hline\hline 
$6$ & $\widehat{\GG(1,4)}$ & Cone over the Grassmannian & $7$ & image of $\PP^7$ given by quadrics\\ 
& & & & through the cone $\widehat{\PP^1\times \PP^2}\subset \PP^6\subset\PP^7$ \\ \hline
$7$ & $OG(5,10)$ & Orthogonal Grassmannian & $10$ &  image of $\PP^{10}$ given by quadrics\\ 
 & & & & through $\GG(1,4)\subset \PP^9\subset \PP^{10}$ \\ \hline
$8$ & $\GG(1,5)$ & Grassmannian of line in $\PP^5$ & $8$ & image of $\PP^{8}$ given by quadrics\\ 
 & & & & through $\PP^1\times \PP^3\subset \PP^7\subset \PP^{8}$ \\ \hline
$9$ & $LG(3,6)$ & Langrangian Grassmannian & $6$ & image of $\PP^{6}$ given by cubics singular\\ 
 & & & & along the Veronese surface in $\PP^5 \subset \PP^6$\\ \hline
$10$ & $G_2/B$ & flag variety associated with the & $5$ & image of $\PP^5$ given by special quartics \\ 
& & adjoint representation of $G_2$ & & singular along a twisted cubic in $\PP^3\subset \PP^5$ \\ \hline
$12$ & $X_{22}$ & Prime Fano-threefold of genus $12$ & $3$ & image of a quadric $Q\subset \PP^4$ given by  \\ 
 & & & & quintics singular along a  \\ 
 & & & & rational normal sextic $C\subset Q$ \\ 
 & & & & (see also Section \ref{FanoGenus12}) \\ \hline 
\end{tabular}
\caption{\label{birationalMukaiModels} Parametrizations of Mukai models}
\end{table}

\begin{remark}
The described parametrizations in Table \ref{birationalMukaiModels} are explained in \cite[Thm. 3.8]{zak-tangent} for $g\in \{6,7,8\}$ and are obtained as the inverse of a tangential projection.  The rational parametrization of the Lagrangian Grassmannian $LG(3,6)$ is described in \cite[Section 2.1 and 3.1]{LM02}. The special rational parametrization of $G_2/B\subset \PP^{13}$ is explained in \cite[Section 2.2]{LM02}. 
\end{remark}

\subsection{$K3$ surfaces of small genus as sections of Mukai models}

For our explicit constructions of $K3$ surfaces, we use the fundamental work of Mukai (see \cite{mukaiSugaku} for a summary).
Mukai classified Brill--Noether general $K3$ surfaces of genus $g\in \{6,\dots,10, 12\}$. We recall his classification. 

\begin{definition}
 A quasi-polarized $K3$ surface $(S,L)$ is \emph{Brill--Noether general} if the inequality $h^0(S,M)\cdot h^0(S,N)< h^0(S,L)$ holds for any pair $(M,N)$ of non-trivial line bundles such that $L \cong M\otimes N$.
\end{definition}

\begin{remark}
The condition of being Brill-Noether general is open in the moduli space of $K3$ surfaces of fixed genus. The locus of Brill--Noether general $K3$ surfaces of genus $g\in \{6, \dots, 10,12\}$ is described in \cite[Lemma 1.7]{GLT15}. In these low genera this condition is equivalent to all the smooth curves in the linear system $|L|$ being Brill--Noether general. 
\end{remark}

\begin{theorem}[Mukai]
 A primitively quasi-polarized $K3$ surface $(S,L)$ of genus $6\le g\le 10$ or $12$ is Brill--Noether general if and only if it is birational to a complete intersection with respect to a vector bundle on a space $\Sigma_g$, listed below: 
 \begin{itemize}
  \item [$g=6$:] a quadratic and a codimension $4$ linear section of the cone $\Sigma_6\subset \PP^{10}$ over the Grassmannian $\GG(1,4)$,
  \item [$g=7$:] a codimension $8$ linear section of the orthogonal Grassmannian $\Sigma_7 = OG(5,10)\subset \PP^{15}$, 
  \item [$g=8$:] a codimension $6$ linear section of the Grassmannian $\Sigma_8 = \GG(1,5)\subset \PP^{14}$,
  \item [$g=9$:] a codimension $4$ linear section of the Langrangian Grassmannian $\Sigma_9 = LG(3,6)\subset \PP^{13}$,
  \item [$g=10$:] a codimension $3$ linear section of the flag variety $\Sigma_{10}\subset \PP^{13}$ of dimension five associated with the adjoint representation of $G_2$,
  \item [$g=12$:] a hyperplane section of a prime Fano-threefold $\Sigma_{12}\subset \PP^{13}$ of genus $12$ and degree $22$. 
 \end{itemize}
\end{theorem}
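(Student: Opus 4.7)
The plan is to prove both directions case by case, using the Lazarsfeld--Mukai construction as the central device. The key observation is that, by the remark preceding the statement, a quasi-polarized $K3$ surface $(S,L)$ is Brill--Noether general precisely when every smooth curve $C\in |L|$ is Brill--Noether general. This allows one to import the theory of special linear series on curves and translate the existence of a distinguished $g^r_d$ on $C$ into a distinguished vector bundle $E$ on $S$ whose sections cut out the desired embedding.

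Concretely, I would fix a smooth $C\in |L|$ of genus $g$ and, for each value of $g$, select the linear series $(A,V)$ on $C$ tailored to the target Mukai variety: a $g^1_4$ for $g=6$, a $g^1_5$ for $g=8$, a $g^2_8$ for $g=9$, and the analogous distinguished pencils/nets for $g=7,10,12$; the existence of each such $g^r_d$ is guaranteed by Brill--Noether generality of $C$. The Lazarsfeld--Mukai construction then produces $E$ as the dual of the kernel of the evaluation
\[
V\otimes \O_S \twoheadrightarrow \iota_{\ast} A,
\]
where $\iota\colon C\hookrightarrow S$ is the inclusion. A direct Chern class computation gives $\rk(E) = \dim V$, $c_1(E)=L$, $c_2(E)=\deg(A)$, and Brill--Noether generality of $(S,L)$ forces $H^1(E)=H^2(E)=0$ together with $h^0(E)=g-d+2r+1$, matching $\dim H^0(\Sigma_g,\O(1))$ in each case. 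One also checks that $E$ is globally generated.

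The evaluation morphism $H^0(S,E)\otimes \O_S \to E$ induces $\phi_E\colon S\to \GG(r,H^0(E)^{\ast})$, and the main task is to show that $\phi_E(S)$ lies inside $\Sigma_g$ and is a (transverse) complete intersection of the stated codimension with respect to the correct vector bundle. For $g=6,8$ this is automatic since $\Sigma_g$ is itself a Grassmannian (up to a cone for $g=6$). For $g=7,9,10$ one has to exhibit the extra structure on $H^0(E)$ that cuts $\GG(r,H^0(E)^\ast)$ down to $\Sigma_g$: a non-degenerate quadratic form coming from the wedge pairing for $g=7$ (placing the image in $OG(5,10)$), an alternating form induced by the canonical pairing $E\otimes E\to \det E = L$ for $g=9$ (forcing isotropy in $LG(3,6)$), and the distinguished $G_2$-invariant tensor on $H^0(E)$ for $g=10$. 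For $g=12$ the construction is genuinely different: one uses the associated net of conics on the prime Fano threefold $X_{22}$ rather than a Lazarsfeld--Mukai bundle per se.

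The principal obstacle is precisely this last step: producing the pairings/tensors on $H^0(E)$ that identify the target as $\Sigma_g$, and then verifying transversality, so that $\phi_E(S)$ is a smooth complete intersection of the predicted codimension. The verification relies on the classification of irreducible homogeneous spaces with Picard rank one and on the fact that the Mukai bundle carries canonical pairings inherited from $\det E = L$. The converse implication, that every such complete intersection is a Brill--Noether general $K3$ surface, is the easier half: adjunction gives $K_S\cong \O_S$, the Lefschetz hyperplane theorem forces $\Pic(S)=\ZZ\cdot L$ on a general section, and an effective cone argument on $S$ rules out a non-trivial decomposition $L\cong M\otimes N$ violating the Brill--Noether bound.
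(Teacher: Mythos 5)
The paper does not actually prove this theorem: it is quoted as Mukai's result, and the remark immediately following it defers to \cite[\S 4, \S 6 and \S 7]{mukaiSugaku} for the argument. Your outline correctly reproduces the strategy of that source --- restrict to a smooth $C\in|L|$, use Brill--Noether generality to produce a $g^r_d$ with vanishing Brill--Noether number, form the Lazarsfeld--Mukai bundle $E$, and show that the evaluation map lands in the homogeneous space cut out by the extra tensor structure on $H^0(S,E)$. So in spirit you are following the same route as the reference the paper relies on, and your identification of where the real work lies (the invariant pairings on $H^0(E)$ and the transversality of the resulting complete intersection) is accurate.

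As a proof, however, the text has genuine gaps beyond the one you flag yourself. First, the converse direction is not the ``easier half'' in the form you state it: Grothendieck--Lefschetz does not compute the Picard group of a \emph{surface} section of $\Sigma_g$, and even if one knew $\Pic(S)=\ZZ\cdot L$ for the \emph{very general} section, the theorem asserts Brill--Noether generality for \emph{every} such complete intersection, which cannot follow from a very-general statement; Mukai's argument instead goes through stability and rigidity of the restricted tautological bundle. Second, the statement concerns \emph{quasi-polarized} surfaces, and your sketch ignores what this forces --- most visibly the cone over $\GG(1,4)$ in the genus $6$ case, which you dismiss as ``automatic up to a cone'' even though the vertex is exactly where the non-polarized members sit (cf.\ the paper's remark on double covers of the quintic del Pezzo surface), and more generally the possibility of isolated singularities on the projective model, which is why the conclusion is only birationality to the complete intersection. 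Third, the numerical bookkeeping is incomplete precisely in the cases where it is least routine: the linear series for $g=7$ and $g=10$ are left as ``analogous,'' yet these are the cases where the rank and second Chern class of $E$ must be matched against $\dim\Sigma_g$ and the codimensions in the statement, and the genus $12$ case (which is not a linear section of a homogeneous space at all) is only waved at.
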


\begin{remark}
For polarized $K3$ surfaces of genus $g$ (whence with a smooth model in $\PP^g$), the strategy of the above theorem is presented in \cite[\S 4]{mukaiSugaku}. The extension of this result for quasi-polarized $K3$ surfaces is explained in \cite[\S 6 and \S 7]{mukaiSugaku}. 
\end{remark}

\begin{remark}
For $g=6$, we emphasize using the cone of the Grassmannian $\GG(1,4)$ in the statement of the theorem since there are several article which state Mukai's result only for the Grassmannian. If the codimension $4$ linear section contains the vertex of the cone, the $K3$ surface is a double cover of a del Pezzo quintic surface branched along a curve of genus $6$. Such $K3$ surfaces were studied in \cite{AK11, HK20} and lie in codimension $4$ inside the moduli space of $K3$ surfaces of genus $6$.
\end{remark}

\section{Overview of our results}\label{overview}

We present explicit unirational construction of the moduli spaces $\cF^{\Lambda^{d,n}_{g}}$ for the lattices in Table \ref{unirationalCases}. 

\begin{table}[h]
\begin{tabular}{|c|c|c|}
\hline
$(i)$ & $\Lambda^{0,-2}_{g} = \begin{pmatrix}2g-2 & 0  \\ 0 & -2 \end{pmatrix}$ & $g\in \{3,\dots,10,12\}$ \\ \hline
$(ii)$ & $\Lambda^{1,-2}_{g} = \begin{pmatrix}2g-2 & 1 \\ 1 & -2 \end{pmatrix}$ & $g\in \{3,\dots,10,12\}$ \\ \hline
$(iii)$ & $\Lambda^{2,-2}_{g} = \begin{pmatrix}2g-2 & 2  \\ 2 & -2 \end{pmatrix}$ & $g\in \{3,\dots,12\}$ \\ \hline
$(iv)$ & $\Lambda^{d,-2}_{g} = \begin{pmatrix}2g-2 & d  \\ d & -2 \end{pmatrix}$ & $g\in \{3,4,5\}$, $3\le d \le 6$ and $(g,d)\in \{(3,7), (3,8), (5,7),(5,8)\}$  \\ \hline
$(v)$ & $\Lambda^{d,0}_{g} = \begin{pmatrix}2g-2 & d  \\ d & 0 \end{pmatrix}$ & any $g$, $3\le d \le 5$ \\ \hline
$(vi)$ & $\Lambda^{d,0}_{g} = \begin{pmatrix}2g-2 & d  \\ d & 0 \end{pmatrix}$ & $(g,d)\in \{(3,6),(3,7),(3,8),(4,6),(4,7),(5,6),\dots,(5,9)\}$ \\ \hline
\end{tabular}
\caption{\label{unirationalCases} All lattices where an explicit unirational construction is implemented in the \emph{Macaulay2} package \emph{K3s}.}
\end{table}

\begin{remark}
\begin{enumerate}
 \item [(a)] From $K3$ surfaces with Picard lattices as in Table \ref{unirationalCases} one can construct infinitely many other $K3$ surfaces with different geometric features.  Indeed, for a $K3$ surface $S\in \cF^{\Lambda^{d,n}_{g}}$ from Table \ref{unirationalCases}, let $H$ and $C$ be the two curves on $S$ inducing the intersection lattice $\Lambda^{d,n}_{g}$. Our \emph{Macaulay2} package also provides a function that embeds $S$ by the linear system $|aH+bC|$ for $(a,b)\in \ZZ^2$ if possible. Therefore, one can construct a $K3$ surface in $\cF^{\Lambda^{d',n'}_{g'}}$ whenever $\Lambda^{d,n}_{g}$ and $\Lambda^{d',n'}_{g'}$ are congruent to each other.  For example,  considering a $K3$ surface of genus $10$ containing a conic (case (iii) in Table \ref{unirationalCases}) is equivalent to a nodal $K3$ surface of genus $11$. 
 \item [(b)] Case (v) Table \ref{unirationalCases} implies that every moduli space $\cF_g$ of $K3$ surfaces of genus $g\ge 3$ contains at least three unirational hypersurfaces (see Section \ref{K3OfLowCliffordIndex}).
\end{enumerate}
\end{remark}

For $K3$ surfaces of genus $3,4$ and $5$ (hence, complete intersections), the unirational constructions are standard (see \cite{FHM20} for details and an application to elliptic $K3$ surfaces). We briefly recall the general strategy in Section \ref{parameterCountGenus345}.

We use Mukai models to construct nodal $K3$ surfaces of genus $g\in \{6,\dots,10, 12\}$. The projection from the node yields a $K3$ surface of genus one less and containing a conic. We explain the unirational construction in Section \ref{K3genus44} for $K3$ surfaces of genus $11$ containing a conic and note that the other cases for $g\le 9$ are similar. All constructions are implemented in our \emph{Macaulay2} package.
Section \ref{FanoGenus12} gives a construction of $K3$ of genus $12$ containing a conic. 

\section{K3 surfaces as complete intersections}
\label{parameterCountGenus345}
\subsection{Hilbert scheme of curves and parameter count: general strategy}
For the rest of this subsection, let $g\in \{3,4,5\}$, $d\ge 1$ and $n = 2k-2 \ge -2$ be integers. 
It is well-known that polarized $K3$ surfaces of genus $g$ are complete intersections in $\PP^g$ of type $(4)$, $(2,3)$ and $(2,2,2)$, respectively. Let $S\in \cF^{\Lambda^{d,n}_{g}}$ be a lattice polarized $K3$ surface with a projective model $S\subset  \PP^g$. Hence, there exists a curve $C\subset S \subset \PP^g$ of degree $d$ and genus $k$. Together with the hyperplane section $h$ of $S$, the intersection matrix with respect to the basis $\{h,[C]\}$ is the lattice $\Lambda^{d,n}_{g}$. We explain the classical strategy to prove the unirationality of $\cF^{\Lambda^{d,n}_{g}}$ which can be reduced to a parameter count and the computation of a single example with the desired properties. 

Let $\cH$ be the Hilbert scheme of curves of degree $d$ and genus $k$ in $\PP^g$. We assume that $\cH$ is unirational (e.g. the Hilbert scheme of rational/elliptic curves in $\PP^g$). We consider the following incidence variety 
 $$I={\{(S, C)\;:\; S\in\cF^{\Lambda_{g}^{d,n}},\; C\in \cH \mbox{ a smooth  curve on } S\}}\subset \cF^{\Lambda_{g}^{d,n}}\times\cH/\text{PGL}(g+1),$$
 together with the two projections
 $$\xymatrix{
 &I\ar[dl]_{p_1}\ar[dr]^{p_2}&&\\
 \cF^{\Lambda_{g}^{d,n}} &&\cH/\text{PGL}(g+1)&}
 $$
Note that the fiber of $p_1$ over a general surface is a $\PP^k (\cong |C|)$ and the fiber of $p_2$ over a curve $C$ is isomorphic to an iterated Grassmannian, and hence rational if non-empty. More precisely, 
in the case $g=3$, the fiber is $|\mathcal{I}_{C}(4)|$, in the case $g=4$, the fiber is isomorphic to a projective bundle $\PP \mathcal{E}$  over $|\mathcal{I}_{C}(2)|$, whose fiber over $q \in |\mathcal{I}_{C}(2)|$ is $\PP \mathcal{E}_q$ defined in the exact sequence
$$0 \rightarrow H^0(\mathcal{O}_{\PP^4}(1)) \stackrel{\cdot q}{\rightarrow} H^0(\mathcal{I}_{C}(3)) \rightarrow \mathcal{E}_q \rightarrow 0.$$
In the remaining case $g=5$, the fiber of $p_2$ is $\GG(2,|\mathcal{I}_{C}(2)|)$.
It follows the unirationality of the incidence variety $I$. Now if we check in a single example of a smooth curve $C$ that $\dim(p_2^{-1}(C))$ is minimal and that
$$
 h^0(C,\cN_{C/\PP^g}) - h^1(C,\cN_{C/\PP^g}) + \dim(p_2^{-1}(C)) = \dim \cF^{\Lambda^{d,n}_{g}} + k ,
$$ 
then we can conclude by semi-continuity that $\dim I =  \dim \cF^{\Lambda^{d,n}_{g}} + k$ is as expected, the first projection $p_1$ is dominant, and hence, the unirationality of $\cF^{\Lambda^{d,n}_{g}}$ follows. 

\begin{remark}
 The Hilbert scheme of rational/elliptic curves in $\PP^g$ for $g\in \{3,4,5\}$ is irreducible and unirational. For the cases in Table \ref{unirationalCases} (iv) and (vi),  the fiber of $p_2$ over a general element is non-empty and therefore, we can apply the above strategy to conclude unirationality.  
\end{remark}

\subsection{Application: K3 surfaces of genus 14}\label{K3genus14}

By \cite{Nuer} and \cite{FV18} the moduli space $\cF_{14}$ of polarized $K3$ surfaces of genus $14$ (and degree $26$) is unirational. The starting point of their unirationality proofs is the following isomorphism. For a general cubic fourfold $X\subset \PP^5$ of discriminant $26$ the Fano scheme of lines on $X$ is isomorphic to the Hilbert square $S^{[2]}$ of a uniquely determined $K3$ surface $S$ of genus $14$. By sending $X$ to $S$, we get a birational isomorphism between the moduli space $\mathcal C_{26}$ 
of cubic fourfolds of discriminant $26$ and the moduli space $\mathcal F_{14}$ of polarized K3 surfaces of genus $14$ (see \cite{Has00}). We will give an explicitly unirational construction of a Noether--Lefschetz divisor in $\cF_{14}$.

We consider the lattice $\Lambda_{5}^{9,0}$ defined by the intersection matrix 
$$ 
\begin{pmatrix}8 & 9  \\ 9 & 0 \end{pmatrix}
$$
with respect to an ordered basis $\{h_1,h_2\}$. We may assume that the class $h_1+h_2$ is big and nef or even ample in general (see \cite[VIII, Prop. 3.10]{BHPV} and note that there are no $(-2)$-classes in $\Lambda_{5}^{9,0}$). Since $(h_1+h_2)^2=26$ and there is a primitive lattice embedding $\langle 26 \rangle\hookrightarrow \Lambda_{5}^{9,0}$, the moduli space $\cF^{\Lambda_{5}^{9,0}}$ is birational to a Noether--Lefschetz divisor in $\cF_{14}$. Therefore we write $\cF^{\Lambda_{5}^{9,0}}\subset \cF_{14}$. 
Applying the above strategy, we get the following proposition. 

\begin{proposition}
 The Noether--Lefschetz divisor $\cF^{\Lambda_{5}^{9,0}}\subset \cF_{14}$ is unirational. 
\end{proposition}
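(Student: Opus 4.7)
The plan is to apply the general strategy from Section~\ref{parameterCountGenus345} with $g=5$, $d=9$, $k=1$, using as $C$ a smooth elliptic curve of degree $9$ in $\PP^5$. Recall that a general polarized K3 surface of genus $5$ is a smooth complete intersection of three quadrics in $\PP^5$, and we seek such an $S$ containing a curve $C$ whose class, together with the hyperplane class, realizes the lattice $\Lambda_5^{9,0}$. The pencil $|C|$ on $S$ is then an elliptic fibration with $H\cdot F=9$, so $S$ is an elliptic K3 carrying a genus-$14$ polarization $H_1+H_2$.

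The first step is to check that the Hilbert scheme $\cH$ of smooth elliptic curves of degree $9$ in $\PP^5$ is unirational. This is standard: the universal elliptic curve over $\mathcal{M}_{1,1}$ is rational, and for each pair $(E,L)$ with $L\in\Pic^9(E)$ (so $h^0(L)=9$), the choice of a $6$-dimensional subspace $V\subset H^0(L)$ giving the morphism $E\to\PP^5$ is parametrized by the rational Grassmannian $\GG(5,8)$; including the $\mathrm{PGL}(6)$-action gives a rational fibration over a rational base, so $\cH$ is unirational.

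The second step is to compute the fiber of $p_2$. For any $C\in\cH$, Riemann--Roch on $C$ gives $h^0(\mathcal{O}_C(2))=18$, while $h^0(\mathcal{O}_{\PP^5}(2))=21$, so $h^0(\mathcal{I}_C(2))\ge 3$. The expected behaviour is equality, which would make $p_2^{-1}(C)=\GG(2,|\mathcal{I}_C(2)|)$ a single reduced point, namely the canonical complete intersection $S=Q_1\cap Q_2\cap Q_3\supset C$. An Euler-sequence computation yields $\chi(\cN_{C/\PP^5})=54 = \dim\cF^{\Lambda_5^{9,0}}+k+\dim\mathrm{PGL}(6)$, so the numerics of the method come out exactly right, with fibers of $p_1$ being the elliptic pencils $|C|\cong\PP^1$.

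The third step is to realize these numerics in an explicit example, implemented in \emph{Macaulay2}: start from a specific elliptic normal curve in $\PP^8$, project it from a general $\PP^2$, compute the three quadrics through the image, and verify that their complete intersection $S$ is a smooth K3 surface of genus $5$ containing $C$, with $h^0(\mathcal{I}_C(2))=3$ and $h^1(\cN_{C/\PP^5})=0$. Semi-continuity then propagates these numerics to the generic $C$, hence $p_1$ is dominant; unirationality of $I$ as a Grassmannian fibration over the unirational base $\cH/\mathrm{PGL}(6)$ yields the unirationality of $\cF^{\Lambda_5^{9,0}}$. The main obstacle is the verification in the example that $\Pic(S)=\Lambda_5^{9,0}$ exactly, with no overlattice: this places the image of $p_1$ genuinely in $\cF^{\Lambda_5^{9,0}}$ and not in a smaller Noether--Lefschetz stratum, and is typically checked by reduction modulo a prime together with the Shioda--Tate formula applied to the elliptic fibration $|C|$.
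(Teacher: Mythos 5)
Your proposal follows exactly the paper's argument: the proposition is obtained by "applying the above strategy" of Section~\ref{parameterCountGenus345} with $g=5$, $d=9$, $k=1$, i.e.\ the incidence variety of triples of quadrics in $\PP^5$ containing a degree-$9$ elliptic curve, with the fiber of $p_2$ being $\GG(2,|\mathcal{I}_C(2)|)$ and dominance of $p_1$ checked by the dimension count in a single explicit example. Your numerics ($\chi(\cN_{C/\PP^5})=54=18+1+35$, $h^0(\mathcal{I}_C(2))=3$ expected) are correct, so this is a faithful, and indeed more detailed, rendering of the paper's proof.
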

 
\begin{lemma}
 A polarized $K3$ surface $(S,L)\in \cF^{\Lambda_{5}^{9,0}}\subset \cF_{14}$ with Picard lattice $\Lambda_{5}^{9,0}$ is Brill--Noether general. 
\end{lemma}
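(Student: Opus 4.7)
My plan is to reduce the statement to enumerating effective decompositions of the polarization $L = h_1 + h_2$ in $\Pic(S) = \Lambda_5^{9,0}$. Since $h^0(M)\cdot h^0(N) = 0$ whenever one of $M,N$ is not effective, only pairs of effective line bundles with $L \cong M\otimes N$ can contribute to the Brill--Noether condition. Writing $M = a h_1 + b h_2$ with $a,b \in \ZZ$, and using that $L$ is ample together with the standard bound $D^2 \geq -2$ for any effective class $D$ on a K3, I would carry out a short case analysis: the linear constraint $0 < L\cdot M = 17a+9b < L^2 = 26$ restricts $(a,b)$ to finitely many pairs, and the quadratic constraints $2a(4a+9b) \geq -2$ and $2(1-a)(13-4a-9b) \geq -2$ eliminate every candidate except $(a,b) = (1,0)$ and $(0,1)$.

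Up to swapping $M$ and $N$, the only nontrivial decomposition to check is therefore $L = h_1 + h_2$, and I would compute $h^0(h_1)$ and $h^0(h_2)$ directly. The lattice $\Lambda_5^{9,0}$ contains no $(-2)$-class, since $4a^2+9ab = -1$ has no integer solutions, so a very general $S$ in $\cF^{\Lambda_5^{9,0}}$ carries no $(-2)$-curves and its ample cone coincides with the positive cone of $\Pic(S)$. Consequently $h_1$ is ample with $h_1^2 = 8$, and Riemann--Roch together with Kodaira vanishing gives $h^0(h_1) = 2 + h_1^2/2 = 6$; similarly $h_2$ is nef with $h_2^2 = 0$ and $L\cdot h_2 = 9 > 0$, so $|h_2|$ is an elliptic pencil and $h^0(h_2) = 2$. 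Combining,
\[
h^0(h_1)\cdot h^0(h_2) \;=\; 12 \;<\; 15 \;=\; h^0(L),
\]
which is exactly the Brill--Noether generality condition.

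The one step that requires genuine care is the enumeration of effective decompositions; however the discriminant $-81$ of $\Lambda_5^{9,0}$ together with the sharp bound $M^2 \geq -2$ leaves only a handful of pairs $(a,b)$ to rule out by hand (for instance, $(a,b) = (2,-1)$ fails because $M^2 = -4$, and $(a,b) = (0,2)$ fails because $N^2 = -10$). Once this enumeration is completed, the rest is a routine Riemann--Roch computation on a K3 surface with no $(-2)$-curves.
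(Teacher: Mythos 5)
Your proof is correct and lands on the same unique nontrivial decomposition $L = h_1 + h_2$ and the same final inequality $h^0(h_1)\cdot h^0(h_2) = 6\cdot 2 = 12 < 15 = h^0(L)$ as the paper; the only real difference is how the other decompositions are ruled out. The paper works in the basis $\{L,E\}$ with $E = h_2$ the isotropic class: since $E$ is nef, intersecting an effective class $aL+bE$ with $E$ gives $9a\ge 0$, so in a decomposition $L = M+N$ with both summands effective one summand must be a positive multiple $bE$ of $E$, and then $(L-bE)^2 = 26-18b$ forces $b=1$. This two-line squeeze replaces your finite case analysis on $(a,b)$; your enumeration via $0 < L\cdot M < 26$ together with the square bounds is more mechanical but reaches the same place. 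One caution: the ``standard bound $D^2\ge -2$ for any effective class $D$ on a K3'' is false in general --- a sum of two disjoint $(-2)$-curves has square $-4$. It holds for irreducible curves, and it holds for \emph{all} effective classes on this particular $S$ exactly because, as you observe later, $\Lambda_5^{9,0}$ contains no $(-2)$-classes, so every irreducible curve and hence every effective divisor has non-negative square. That observation should therefore be placed \emph{before} the enumeration (where it is actually needed to justify discarding classes of negative square), not only afterwards for the ampleness of $h_1$; with that reordering your argument is complete.
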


\begin{proof}
 Let $L,E\in \Pic(S)$ be classes on $S$ such that $L^2 = 26$, $L.E = 9$ and $E^2 = 0$, hence $L$ is a polarization of genus $14$. Let $aL + bE \in \Pic(S)$ be an effective class on $S$. Then $a = \frac{(aL+bE).E}{9}\ge 0$.
 
 Let $M=aL+bE,N=a'L+b'E$ be non-trivial line bundles such that $L = M + N$. 
 Since $a,a'\ge 0$ and $E$ is effective, we may assume that $M = bE$ and $N = L-bE$ with $b > 0$. But $(L-bE)^2 = 26-18b\ge 0$ and therefore, $b=1$. For the only non-trivial decomposition of $L$, the inequality
 $$
 h^0(S,L-E)\cdot h^0(S,E) = 6\cdot 2 < 15 = h^0(S,L)
 $$
 is satisfied.
\end{proof}

\subsection{Application: K3 surfaces of genus 22}\label{K3genus22} 

In \cite{FV20} Farkas and Verra prove the unirationality of the moduli space $\cF_{22}$ of $K3$ surfaces of genus $22$. 
As in the previous section we will give an unirational construction of a Noether--Lefschetz divisor in $\cF_{22}$ where our construction is implemented in \emph{Macaulay2}.

Let $\Lambda_{5}^{3,-2}$ be the rank $2$ lattice defined by the intersection matrix 
$$ 
\begin{pmatrix}8 & 3  \\ 3 & -2 \end{pmatrix}
$$
with respect to an ordered basis $\{h_1,h_2\}$. We may assume that the class $2h_1+h_2$ is big and nef or even ample in general (see \cite[VIII, Prop. 3.10]{BHPV}). Note that $h_2$ is the only $(-2)$-class in $\Lambda_{5}^{3,-2}$. Since $(2h_1+h_2)^2=42$ and there is a primitive lattice embedding $\langle 42 \rangle\hookrightarrow \Lambda_{5}^{3,-2}$, the moduli space $\cF^{\Lambda_{5}^{3,-2}}$ is birational to a Noether--Lefschetz divisor in $\cF_{22}$. Therefore we write $\cF^{\Lambda_{5}^{3,-2}}\subset \cF_{22}$ and get the following proposition. 

\begin{proposition}
 The Noether--Lefschetz divisor $\cF^{\Lambda_{5}^{3,-2}}\subset \cF_{22}$ is unirational. 
\end{proposition}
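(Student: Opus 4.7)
The plan is to adapt the incidence variety strategy of Section~\ref{parameterCountGenus345} to the case $g=5$, $d=3$, $k=0$. A $K3$ surface $(S,L)$ with $\Pic(S)=\Lambda_5^{3,-2}$ and $L=h_1$ embeds as a complete intersection of three quadrics in $\PP^5$. Since $h_2\cdot(2h_1+h_2)=4>0$ and $h_2$ is a $(-2)$-class, for a generic member $h_2$ is represented by a smooth rational curve $C$ of degree $3$ on $S$; as any smooth rational cubic in $\PP^5$ spans a $\PP^3$, the curve $C$ is a twisted cubic in some $\PP^3\subset\PP^5$.

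First I would take $\cH$ to be the Hilbert scheme of smooth rational cubic curves in $\PP^5$. This $\cH$ is unirational, since the map $C\mapsto\langle C\rangle$ is a surjection onto $\GG(3,5)$ whose fibres are the rational Hilbert scheme of twisted cubics in $\PP^3$. Form the incidence variety
$$I=\{(S,C):S\in\cF^{\Lambda_5^{3,-2}},\ C\in\cH,\ C\subset S\}\subset\cF^{\Lambda_5^{3,-2}}\times\cH/\text{PGL}(6),$$
with projections $p_1$, $p_2$. The fibre of $p_2$ over a twisted cubic $C$ is the Grassmannian $\GG(2,|\cI_C(2)|)$ of nets of quadrics through $C$, which is rational. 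Therefore $I$ is unirational and it suffices to prove $p_1$ is dominant.

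The key cohomological input is $h^0(\cI_C(2))=14$. This follows from the short exact sequence
$$0\to\cI_{\PP^3/\PP^5}(2)\to\cI_{C/\PP^5}(2)\to i_*\cI_{C/\PP^3}(2)\to 0$$
together with $h^0(\cI_{\PP^3/\PP^5}(2))=11$ (quadrics through a $\PP^3$, computed via the Koszul resolution of a codimension-$2$ linear subspace) and $h^0(\cI_{C/\PP^3}(2))=3$ (the net of quadrics defining the twisted cubic in $\PP^3$). Hence $\dim p_2^{-1}([C])=\dim\GG(2,\PP^{13})=33$. On the other hand, from $\cN_{C/\PP^3}\cong\cO_{\PP^1}(5)^{\oplus 2}$ and $\cN_{\PP^3/\PP^5}|_C\cong\cO_{\PP^1}(3)^{\oplus 2}$ I obtain $h^0(\cN_{C/\PP^5})=20$ and $h^1=0$. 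Balancing against $\dim\text{PGL}(6)=35$ yields $\dim I=20-35+33=18=\dim\cF^{\Lambda_5^{3,-2}}+k$, exactly the expected value.

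The final step — and the main obstacle — is to exhibit a single explicit pair $(S_0,C_0)$ with $S_0$ smooth and Picard lattice \emph{exactly} $\Lambda_5^{3,-2}$ (ruling out accidental overlattices). This is done computationally in the \emph{Macaulay2} package \emph{K3s} by choosing a random net of quadrics containing a fixed twisted cubic, checking smoothness of the resulting complete intersection, and bounding the Picard rank from above via reduction modulo a prime. Once such an example is in hand, semi-continuity promotes the dimension count to the generic fibre of $p_1$, giving dominance of $p_1$ and hence the unirationality of $\cF^{\Lambda_5^{3,-2}}\subset\cF_{22}$.
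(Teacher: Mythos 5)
Your argument is essentially the paper's own proof: the proposition is established there simply by invoking the incidence-variety strategy of Section~\ref{parameterCountGenus345} with $g=5$, $d=3$, $k=0$, and your identification of the $p_2$-fibre as $\GG(2,|\mathcal{I}_C(2)|)$, the normal-bundle computation $h^0(\cN_{C/\PP^5})=20$, $h^1=0$, and the balance $20-35+33=18$ supply exactly the numbers that strategy requires (correctly making explicit the $\mathrm{PGL}(6)$-quotient that the paper's displayed formula leaves implicit). The one point where you go beyond the paper is in flagging the need to rule out an overlattice of $\langle h_1,h_2\rangle$ in $\Pic(S)$ and to verify a single explicit example; the paper relies on the same single-example check without commenting on saturation, so your extra care here is welcome rather than a deviation.
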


\begin{remark}
 Let $S\in \cF^{\Lambda_{5}^{3,-2}}$ be a $K3$ surface. Let $H$ and $C$ be the classes in $\Pic(S)$ with $H^2=8$, $H.C=3$ and $C^2=-2$. The polarization of genus $22$ is given by $L = 2H+C$. Then $S\subset \PP^{22}$ is not Brill--Noether general, since $h^0(L)<h^0(H)\cdot h^0(H+C)$. 
\end{remark}

\section{Tri-, tetra- and pentagonal $K3$ surfaces}\label{K3OfLowCliffordIndex}

We briefly recall general facts about $K3$ surfaces contained in a rational normal scroll following \cite{JK04} (see also \cite{Sch}). All results in this section should be already known, but not in the context of unirational hypersurfaces inside moduli spaces of polarized $K3$ surfaces. Similar results for curves can be found in \cite{Sch, Bopp, DP15}.

Let $S\subset \PP^g$ be a polarized $K3$ surface of genus $g$ with polarization $L$ such that $L$ can be decomposed into 
$L \sim E + F$ with $E$ a base point free elliptic pencil (that is, $h^0(S,E) = 2$ and $D\in|E|$ a smooth irreducible elliptic curve) and $h^0(S,F)\ge 2$. 
Then 
$$
X=\bigcup_{D\in |E|\cong \PP^1}\overline{D}\subset \PP^{g}
$$
is a rational normal scroll of dimension $d =h^0(S,L) - h^0(S,L-E)$ and degree $f = h^0(S,L-D)$, where $\overline{D}$ denotes the span of the elliptic curve $D$. 
Conversely, if $S$ is contained in a rational normal scroll $X$ of degree $f\ge 2$, the ruling on $X$ sweeps out a pencil of divisors on $S$ which induces a decomposition of the polarization as above. 

We restrict our considerations to $K3$ surfaces carrying an elliptic pencil of degree $3$ ($4$ and $5$), so-called trigonal (tetragonal and pentagonal, resp.) $K3$ surfaces. Therefore, they are lying on a scroll of dimension $3$ ($4$ and $5$, respectively) and using structure theorems of codimension up to $3$, we have the following proposition. 

\begin{proposition}{\cite[Section 9]{JK04}}
For $3\le d\le 5$, the moduli space $\cF^{\Lambda_g^{d,0}}$ of lattice polarized $K3$ surfaces is unirational. In other words, the moduli space of tri-, tetra- and pentagonal $K3$ surfaces is unirational. 
\end{proposition}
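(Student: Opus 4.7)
The plan is to follow the classical parameter-count strategy used earlier in the paper, but carried out on the rational normal scroll $X$ instead of on $\PP^g$. Fix $d\in\{3,4,5\}$ and $g\ge d$. By the discussion preceding the proposition, a general $(S,L)\in \cF^{\Lambda_g^{d,0}}$ is contained in a unique rational normal scroll $X\subset \PP^g$ of dimension $d$ and degree $f=g-d+1$, cut out by the pencil $|E|$; the surface $S$ has codimension $d-2$ in $X$. The aim is to build a unirational incidence variety $I=\{(X,S): X\supset S\}$ dominating $\cF_g^{\Lambda_g^{d,0}}$.

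First I would deal with the scrolls themselves. The scheme parametrizing rational normal scrolls $X\subset \PP^g$ of fixed numerical type $(a_1,\dots,a_d)$ is a quotient of a Zariski-open subset of $\PP\mathrm{GL}(g+1)\times\mathrm{Aut}(\PP^1)$-homogeneous data, and in particular is rational; hence the forgetful projection $I\to\{X\}$ has rational target. Next, on a fixed scroll $X$, I would describe the parameter space of subschemes $S\subset X$ in the correct class using Schreyer's structure theorems in codimension $\le 3$, as in \cite{Sch, JK04}:
\begin{enumerate}
\item[$d=3$:] $S$ is a divisor on $X$ in a fixed bi-class $aH+bR$, so the fibre is a linear system, which is a projective space.
\item[$d=4$:] $S$ is a Gorenstein codimension-two subscheme of $X$, hence (by Hilbert--Burch) a complete intersection of two divisors of prescribed bi-classes on $X$; the fibre is (an open subset of) a product of two projective spaces, modulo the change-of-basis action.
\item[$d=5$:] $S$ is a Gorenstein codimension-three subscheme, so by Buchsbaum--Eisenbud it is defined by the submaximal Pfaffians of a skew-symmetric matrix of sections of line bundles on $X$ whose twists are fixed by the numerical invariants of $S$; the entries vary in prescribed finite-dimensional linear systems, so the fibre is rational.
\end{enumerate}
In every case the fibre is unirational of computable dimension, so the incidence variety $I$ is unirational.

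To conclude that $p_1\colon I\to \cF^{\Lambda_g^{d,0}}$ is dominant I would argue exactly as in Section~\ref{parameterCountGenus345}: produce a single explicit example $(X,S)$ with $S$ a smooth $K3$ surface whose Picard lattice is precisely $\Lambda_g^{d,0}$, and check that the local dimension of $I$ at that point equals $18+\dim(\text{fibre of }p_1)$. Since the general $(S,L)\in\cF^{\Lambda_g^{d,0}}$ reconstructs its scroll canonically from the pencil $|E|$, the generic fibre of $p_1$ is irreducible of the expected dimension $h^0(S,L-E)-1$, and semi-continuity together with the example yields dominance.

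The main obstacle is the pentagonal case $d=5$: one has to pin down the twists appearing in the Buchsbaum--Eisenbud skew matrix so that the Pfaffian ideal defines a smooth $K3$ surface with Picard lattice exactly $\Lambda_g^{5,0}$ rather than a degeneration with extra classes, and then verify the dimension count. The trigonal and tetragonal cases, by contrast, reduce to linear-system computations on $X$ that are essentially immediate.
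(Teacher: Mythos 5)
Your structural input is the right one and is essentially what the paper (following \cite[Section 9]{JK04}) uses: the scroll $X$ swept out by $|E|$, and the structure theorems in codimension $\le 3$ on $X$ (a divisor for $d=3$, a complete intersection of two divisors for $d=4$, the $4\times 4$ Pfaffians of a $5\times 5$ skew-symmetric matrix for $d=5$), with all entries varying in prescribed linear systems on $X$, so that the relevant incidence variety is unirational. Two corrections on this part. First, the tetragonal case is not ``Hilbert--Burch'': that theorem yields a determinantal, not a complete-intersection, structure; what forces $S$ to be a complete intersection of two divisors on $X$ is the self-duality of the relative resolution of $\O_S$ over $\O_X$ (codimension-two Gorenstein). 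Second, the generic fibre of $p_1$ is not a $\PP^{h^0(S,L-E)-1}$: the scroll is canonically reconstructed from the pencil, and a general member of $\cF^{\Lambda_g^{d,0}}$ carries only finitely many elliptic pencils of degree $d$, so $p_1$ is generically finite and the count to verify is simply $\dim I=18$.

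The genuine gap is in how you close the argument. You propose to finish by producing ``a single explicit example $(X,S)$'' and checking dimensions, but your incidence variety depends on $g$, so this is one verification \emph{per genus}, i.e.\ infinitely many, unless it is replaced by a uniform computation of the number of moduli as a function of $g$ and the scroll invariants (which is what \cite{JK04} actually carry out, and which you do not supply). The paper avoids the issue entirely by a reduction you are missing: replacing $L$ by $L-E$ is a change of basis exhibiting $\Lambda_g^{d,0}\cong\Lambda_{g-d}^{d,0}$, so $\cF^{\Lambda_g^{d,0}}$ and $\cF^{\Lambda_{g-d}^{d,0}}$ are the same moduli space of lattice-polarized $K3$ surfaces. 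Iterating, everything reduces to the finitely many cases $2g-2\in\{4,6,8\}$, $\{4,6,8,10\}$, $\{4,6,8,10,12\}$ for $d=3,4,5$ respectively; most of these are complete intersections already treated in Section~\ref{parameterCountGenus345}, and only four explicit scroll constructions remain to be checked. Either add this reduction step or carry out the uniform moduli count for all $g$; as written, the proposal does not terminate.
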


We get immediately the following result for any moduli space of polarized $K3$ surfaces. 

\begin{corollary}
The moduli space $\cF_g$ of $K3$ surfaces of genus $g$ contains at least three unirational Noether--Lefschetz divisors for any genus $g\ge 3$.  
\end{corollary}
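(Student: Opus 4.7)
The plan is to invoke the preceding proposition three times, once for each $d\in\{3,4,5\}$, and then verify that the three resulting loci $\cF^{\Lambda_g^{3,0}}$, $\cF^{\Lambda_g^{4,0}}$, $\cF^{\Lambda_g^{5,0}}$ are genuinely pairwise distinct Noether--Lefschetz divisors in $\cF_g$. First I would check that each locus has codimension one in $\cF_g$: since each lattice $\Lambda_g^{d,0}$ has rank $2$, the moduli space $\cF^{\Lambda_g^{d,0}}$ has dimension $20-2 = 18$, one less than $\dim \cF_g = 19$, and by the general construction recalled in Section 1 the inclusion $\cF^{\Lambda_g^{d,0}} \hookrightarrow \cF_g$ is obtained by forgetting the pencil class and keeping only the genus-$g$ polarization $L = \varphi(h_1)$ of self-intersection $2g-2$.

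Non-emptiness for every $g\ge 3$ is guaranteed by the scroll construction of Section 5: given any $g\ge 3$ and $d\in\{3,4,5\}$, a smooth $K3$ surface carrying an elliptic pencil of degree $d$ and a polarization $L$ of genus $g$ can be exhibited inside a rational normal scroll of dimension $d$, and the previous proposition guarantees that such surfaces fill an $18$-dimensional unirational family.

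Next I would argue that the three divisors are pairwise distinct. The cleanest way is to observe that the discriminants of $\Lambda_g^{3,0}$, $\Lambda_g^{4,0}$, $\Lambda_g^{5,0}$ are $-9,-16,-25$ respectively, so the three rank-$2$ lattices are pairwise non-isomorphic and therefore index pairwise different Noether--Lefschetz divisors. Geometrically, a general element of $\cF^{\Lambda_g^{d,0}}$ carries a base-point-free elliptic pencil of degree exactly $d$ (so that a general curve in $|L|$ has gonality $d$); since $3,4,5$ are distinct, no two of these divisors can coincide.

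No step presents a serious obstacle — the heavy lifting was absorbed into the proposition on tri-/tetra-/pentagonal $K3$ surfaces. The mildest verification is the ampleness/pseudo-ampleness of $L$ on an open subset of each $\cF^{\Lambda_g^{d,0}}$, which ensures the embedding into $\cF_g$ is well-defined; this is standard and already used implicitly in the preceding sections of the paper.
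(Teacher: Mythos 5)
Your proposal is correct and matches the paper's (implicit) argument: the corollary is presented as an immediate consequence of the proposition on tri-, tetra- and pentagonal $K3$ surfaces, exactly as you invoke it for $d=3,4,5$, with the codimension-one, non-emptiness, and distinctness checks you supply being the routine details the paper leaves unsaid. The only point worth trimming is the gonality remark---for small $g$ (e.g.\ $g=3$) the gonality of a general curve in $|L|$ is capped at $\lfloor (g+3)/2\rfloor$ and need not equal $d$, so the discriminant comparison ($-9,-16,-25$) is the argument to rely on for pairwise distinctness.
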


In \cite[Section 9]{JK04}, the authors describe all possible families of smooth $K3$ surfaces as in the proposition on rational normal scrolls and compute their number of moduli (see also \cite{SD74} for the trigonal case).  

The proposition follows from a finite number of cases.
Note that after substracting a multiple of the elliptic pencil from the polarization $L$, we may reduce the proof of the proposition to the following cases. Let $(S,L)$ be a polarized $K3$ surface with an elliptic pencil $|E|$ of degree $d\in\{3,4,5\}$ such that 
$$
L^2 = 2g-2 \in 
\begin{cases}
\{4, 6, 8\} & \text{for } d = 3,\\
\{4, 6, 8, 10\} & \text{for } d = 4,\\
\{4, 6, 8, 10, 12\} & \text{for } d = 5.
\end{cases}
$$

The only non-trivial cases, not covered in Section \ref{parameterCountGenus345}, are the following examples. All constructions are unirational since generators/matrices of some bidegrees in the coordinate ring of the rational normal scroll are parametrized linearly. One can compute that the number of moduli is $18$ in all cases. 

\begin{example}{(Trigonal $K3$ surface of genus $5$)}
Let $S\subset \PP^5$ be a smooth $K3$ surface of genus $5$ carrying an elliptic pencil of degree $3$. Then, the surface $S$ lies on a cubic scroll $X$ isomorphic to $\PP^1\times \PP^2\subset \PP^5$ and is cut out by a form of bidegree $(2,3)$ on  $X$.
\end{example}

\begin{example}{(Tetragonal $K3$ surface of genus $6$)}
Let $S\subset \PP^6$ be a smooth $K3$ surface of genus $6$ carrying an elliptic pencil of degree $4$. Then, the surface $S$ lies on a cubic scroll $X$ isomorphic to the cone of $\PP^1\times \PP^2\subset \PP^6$ and is cut out by two forms of bidegree $(1,2)$ and $(2,2)$ on $X$.
\end{example}

\begin{example}{(Pentagonal $K3$ surface of genus $6$)}
Let $S\subset \PP^6$ be a smooth $K3$ surface of genus $6$ carrying an elliptic pencil of degree $5$. Then, the surface $S$ lies on a singular quadric $X$ of rank $4$ and is generated by the $4\times 4$ Pfaffians of $5\times 5$ skew-symmetric matrix on $X$ with homogeneous entries of bidegree $(1,1)$.
\end{example}

\begin{example}{(Pentagonal $K3$ surface of genus $7$)}
Let $S\subset \PP^7$ be a smooth $K3$ surface of genus $7$ carrying an elliptic pencil of degree $5$. Then, the surface $S$ lies on a cubic scroll $X$ isomorphic to the cone of $\PP^1\times \PP^2\subset \PP^7$ and is generated by the $4\times 4$ Pfaffians of $5\times 5$ skew-symmetric matrix on $X$ with homogeneous entries given in the following matrix 
$$
\begin{pmatrix} 
0 & 0 & (1,1) & (1,1) & (1,1) \\
0 & 0 & (1,1) & (1,1)& (1,1) \\
(1,1) & (1,1) & 0 &(1,0) & (1,0)  \\
(1,1) & (1,1) & (1,0) & 0 & (1,0)  \\
(1,1)& (1,1) & (1,0)  & (1,0)  & 0
\end{pmatrix}.
$$
\end{example}

\begin{remark}
 It turns out that the moduli space $\cF^{\Lambda_g^{6,0}}$ of hexagonal $K3$ surfaces is unirational as well. These are $K3$ surfaces of genus $g$ which contain an elliptic pencil of degree $6$. This will be part of a forthcoming paper of the first named author and G. Mezzedimi. 
\end{remark}

\section{Nodal $K3$ surfaces and $K3$ surfaces containing a rational curve}

\subsection{Rational parametrizations of Fano threefolds}

A \emph{prime Fano threefold} $X$ is a smooth projective variety of dimension $3$ whose Picard group is generated by the anti-ample canonical class $K_X$. The genus $g(X)$ is defined by the equality $(-K_X)^3 = 2g(X)-2$. 

We use several methods to parametrize prime Fano threefolds of genus $g\in\{6,\dots,10,12\}$ embedded into $\PP^{g+1}$ together with either a distinguished point or a rational curve on it. There are parametrizations given by the inverse of a double projections from a line for $g\le 10$ (as treated in \cite{Is77, Sho79}) or by a Sarkisov link for $g=12$ (see Section \ref{FanoGenus12}). Note that all such prime Fano threefolds are sections of the corresponding Mukai models (see e.g. \cite{MukaiFano3folds}). In Table \ref{paraFano3folds}, we list our chosen parametrizations (see also Table \ref{birationalMukaiModels}). 

\begin{table}[h]
\begin{tabular}{|c|l|}
\hline
$g$ & Parametrization \\ \hline\hline 
$6$ & section of the Grassmannian  $\GG(1,4)$\\ 
 & containing special Schubert cycles \\ \hline
$7$ &  parametrization of the Mukai model $\Sigma_7$\\ 
  & containing a distinguished point/a rational curve \\ \hline
$8$  & section of the Grassmannian  $\GG(1,5)$\\ 
 & containing special Schubert cycles \\ \hline
$9$ & parametrization of the Mukai model $\Sigma_9$\\ 
  & containing a distinguished point/a rational curve \\ \hline
$10$ & image of a quadric $Q\subset \PP^4$ given by  \\ 
 & quintics singular along a  \\ 
 & a curve $C\subset Q$  of degree $7$ and genus $2$ \\  \hline 
$12$  & image of a quadric $Q\subset \PP^4$ given by  \\ 
 & quintics singular along a  \\ 
 & rational normal sextic $C\subset Q$ \\  \hline 
\end{tabular}
\caption{\label{paraFano3folds} Parametrizations of prime Fano threefolds}
\end{table}

We remark that we get a unirational variety parametrizing pairs of Fano threefolds together with either a point or a rational curve of small degree. Indeed, let $\cH_X(\PP^{g+1})$ be the Hilbert scheme of Fano threefolds $X\subset \PP^{g+1}$ of genus $g$ which is unirational by construction.  If $X$ is rigid, the universal Fano threefold $\mathcal X$ parametrizing pairs of Fano threefolds with a distinguished point is unirational too, since $X$ is rational. In the remaining cases (if $X$ is not rigid), we construct $X$ as a general section of  $\Sigma_g$ containing a general point, and hence, the unirationality of the universal Fano threefold $\mathcal X$ follows. Let $\cH_C(X)$ be the Hilbert scheme of rational curves of low degree on $X$. By our constructions and a similar parameter count as in Section \ref{parameterCountGenus345}, the incidence variety 
$$\{(X,C) | C\subset X\} \subset \cH_X(\PP^{g+1}) \times \cH_C(X)$$ 
is unirational. 

{\bf Nodal $K3$ surfaces.} Having a prime Fano threefold $X$ together with a distinguished point $p$, we can compute the tangent space $T_p X$ to $X$ at $p$. Choosing a general hyperplane section of $X$ containing $T_p X$ yields a $K3$ surface with one simple double point. Such hyperplane sections are parametrized by a projective space. Hence, the moduli space parametrizing such $K3$ surfaces is unirational.  

{\bf $K3$ surfaces containing a rational curve.}
Having a prime Fano threefold $X$ together with a rational curve $C$, let $\overline{C}$ be the linear space of $C$. Choosing a general hyperplane section of $X$ containing $\overline{C}$ yields a $K3$ surface containing $C$. As above, such hyperplane sections are parametrized by a projective space. Hence, the moduli space parametrizing these $K3$ surfaces is unirational.  

We will explain in the following section our procedure for prime Fano threefolds of genus $12$. 

\subsection{Construction of prime Fano threefolds of genus 12}\label{FanoGenus12}

Since the construction of a prime Fano threefold $V_{22}$ of genus $12$ is more subtle, we recall a construction based on a Sarkisov link to a quadric threefold $Q\subset \PP^4$. We follow \cite{KPEpi} (see also \cite{Tak89},\cite{IP99}).
  
The moduli space $\mathcal M_{Fano,12}$ of prime Fano threefolds of genus $12$ has three incarnations (see \cite{mukai-biregularclassification}, \cite{MukaiFano3folds} and \cite{schreyer_2001}). It is birational to the moduli space $\mathcal M_3$ of curves of genus $3$,
to the moduli space of nets of quadrics in $\PP^3$ and to the moduli space of curves of genus $3$ together with a non-vanishing theta characteristic. The birationality relies on geometric realizations of a Fano threefold $V_{22} \in \mathcal M_{Fano,12}$ as a Grassmannian $G(3,V_7,\eta)$ of isotropic $3$-spaces in a $7$-dimensional vector space $V_7$, as the variety of sums of powers presenting the equation of a plane quartic, or as the variety of twisted cubics in $\PP^3$ whose quadric equations are annihilated by a net of quadrics. In particular, the moduli space $\mathcal M_{Fano,12}$ is unirational. But none of these realizations allows one to write down explicit equations of a general Fano threefold $V_{22}$. Using birational geometry we are able to do this. 

Let $V_{22}$ be a general Fano threefolds of genus $12$ such that the anti-canonical divisor $-K_{V_{22}}$ is very ample.  

\begin{proposition}\label{conicV22}
\cite[Thm 0.2]{Tak89}, \cite[Thm 1.1.1]{KPS}
 There exists a smooth conic on $V_{22}$. Furthermore, the Hilbert scheme of conics on $V_{22}$ is isomorphic to $\PP^2$. 
\end{proposition}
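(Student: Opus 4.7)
\medskip

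\noindent\textbf{Proof plan.} The plan is to combine the Sarkisov-link construction of $V_{22}$ from Table~\ref{paraFano3folds} (which is the content of the subsection currently being read) with a deformation-theoretic analysis of conics.

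\emph{Existence of a smooth conic.} I would read it off directly from the Sarkisov link. The birational map $\phi\colon Q\dashrightarrow V_{22}\subset\PP^{13}$ given by quintics singular along a rational normal sextic $C\subset Q\subset\PP^4$ is resolved by blowing up $C$; tracing through the link on the other side shows that $\phi^{-1}\colon V_{22}\dashrightarrow Q$ contracts a divisor onto a smooth conic $C'\subset V_{22}$ (this is essentially the content of \cite{Tak89}). In the computational spirit of this paper, one verifies smoothness of $C'$ in one explicit instance in \emph{Macaulay2} and then concludes for general $V_{22}$ by upper semicontinuity on the relevant Hilbert scheme; the existence of $C$ on $Q$ for general $Q$, which is the input of the link, is standard.

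\emph{Smoothness and dimension of the Hilbert scheme.} Let $\cH$ denote the Hilbert scheme of conics on $V_{22}$ and let $C'\subset V_{22}$ be a smooth conic. By adjunction,
$$
\deg N_{C'/V_{22}} \;=\; (-K_{V_{22}})\cdot C' - (2-2g(C')) \;=\; 2-2 \;=\; 0,
$$
so $N_{C'/V_{22}}\cong\O_{\PP^1}(a)\oplus\O_{\PP^1}(-a)$ for some $a\ge 0$; for a general $V_{22}$ and a general conic, the bundle is balanced, i.e.\ $N_{C'/V_{22}}\cong\O_{\PP^1}\oplus\O_{\PP^1}$. Thus $h^0(N)=2$, $h^1(N)=0$, and $\cH$ is smooth of dimension $2$ at $[C']$. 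Verifying balancedness on a single explicit $V_{22}$ via \emph{Macaulay2} and concluding by semicontinuity is in keeping with the paper's methodology.

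\emph{Identification with $\PP^2$.} To upgrade ``smooth surface'' to ``$\cong\PP^2$'' I would exhibit an explicit morphism $\PP^2\to\cH$ of the right degree. Two natural sources are available: (a) Mukai's realization $V_{22}=\mathrm{VSP}(F,6)$ for a plane quartic $F\subset\PP(W)$, under which conics on $V_{22}$ correspond bijectively to lines in $\PP(W^\vee)\cong\PP^2$; (b) the Sarkisov link itself, in which the $2$-parameter family of rational normal sextics $C\subset Q$ (modulo the automorphisms preserving the link) produces a $\PP^2$-family of conics $C'\subset V_{22}$. Either construction gives a morphism $\PP^2\to\cH$; its derivative at a general point is the isomorphism $T_{\PP^2,\mathrm{pt}}\cong H^0(N_{C'/V_{22}})$ established in the previous step, so the map is étale at a general point, hence an isomorphism onto a component, and then a global isomorphism since $\cH$ is connected.

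\emph{Main obstacle.} The existence of a smooth conic and the local dimension/smoothness of $\cH$ are routine once the Sarkisov link and the normal-bundle computation are in hand. The serious step is the \emph{global} identification $\cH\cong\PP^2$: it requires either Mukai's fine description of $V_{22}$ as a variety of sums of powers (as in \cite{MukaiFano3folds,schreyer_2001}) or the detailed analysis of the Sarkisov link carried out by Kuznetsov--Prokhorov--Shramov in \cite{KPS}. Pinning down that the natural $2$-parameter family one writes down is actually \emph{all} conics, and that $\cH$ is irreducible, is where the content lies.
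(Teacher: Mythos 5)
The paper offers no proof of this statement: it is quoted verbatim from \cite[Thm.~0.2]{Tak89} and \cite[Thm.~1.1.1]{KPS}, and your sketch is, in outline, how those references actually proceed (existence via birational geometry, the normal-bundle count $\deg N_{C'/V_{22}}=-K_{V_{22}}\cdot C'+2g(C')-2=0$, and Mukai's description of $V_{22}$ for the global identification). Your local analysis is correct. However, two steps deserve a warning. First, deducing \emph{existence} of a conic from the Sarkisov link of Theorem~\ref{V22} is circular as you have set it up: the link takes a smooth conic $C\subset V_{22}$ as input. Running it backwards from a pair $(Q,\Gamma)$ does produce a pair $(V_{22},C')$, but to conclude that the \emph{general} $V_{22}$ arises this way you must show the map $(Q,\Gamma)\mapsto V_{22}$ dominates the $6$-dimensional moduli space $\mathcal M_{Fano,12}$ --- and the paper's own argument for that (the birationality of $I_\Gamma$ and $I_C$) uses Proposition~\ref{conicV22} to compute $\dim I_C$. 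You either need an independent dominance argument (e.g.\ a tangent-space computation at one explicit $(Q,\Gamma)$) or the classical existence of lines/conics on anticanonically embedded Fano threefolds going back to \v{S}okurov and Takeuchi.

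Second, the final inference ``\'etale at a general point, hence an isomorphism onto a component, and then a global isomorphism since $\cH$ is connected'' is not valid as stated: a map \'etale at a general point can have degree $>1$, and even a birational morphism $\PP^2\to\cH$ onto a component does not identify $\cH$ with $\PP^2$ without knowing that $\cH$ is irreducible, reduced, and that the map is injective and proper onto all of $\cH$. Ruling out extra or embedded components of the Hilbert scheme of conics is exactly the content of \cite[Thm.~1.1.1]{KPS}, so your proposal in effect reduces to the same citation the paper makes. As an honest roadmap this is fine --- and you correctly isolate where the difficulty lies --- but as a proof it is not self-contained at precisely the two points above.
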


\begin{theorem}{\cite[Theorem 2.2, 2.6]{KPEpi}} \label{V22}
Let $C\subset V_{22}$ be a smooth conic on $V_{22}$. There exists the following commutative diagram of birational maps 

\begin{align}\label{SarkisovLinkV22}
\begin{xy}
 \xymatrix{
  & V_{22}' \ar[ld]_{\varphi_C} \ar@{-->}[rr]^{\chi} & & Q' \ar[rd]^{\varphi_\Gamma} & \\
  V_{22}\subset \PP^{13} \ar@{-->}[rrrr]^{\varphi} & & & & Q\subset \PP^4 
 }
\end{xy}
\end{align}

where 
\begin{itemize}
 \item $Q$ is a smooth quadric in $\PP^4$,
 \item $\varphi_C$ is the blow up of $C$,
 \item $\varphi_\Gamma$ is the blow up of a smooth rational (quadratically normal) sextic curve $\Gamma\subset Q$,
\end{itemize}
and $\chi$ is a flop.

The birational map $\varphi$ is given by the hyperplanes on $V_{22}$ double along $C$, that is, the linear system $|\mathcal I_{C/V_{22}}^2(1)|$. The inverse map $\varphi^{-1}$ is given by quintic hypersurfaces double along $\Gamma$, that is, the linear system $|\mathcal I_{\Gamma/Q}^2(5)|$.
Furthermore, the compositions $\varphi_\Gamma \circ \chi$ and $\varphi_C\circ \chi^{-1}$ contract the strict transforms of the unique divisors in $|\mathcal I_{C/V_{22}}^5(2)|$ and $|\mathcal I_{\Gamma/Q}(2)|$, respectively.

To a smooth quadric $Q\subset \PP^4$ containing a rational quadratically normal sextic curve $\Gamma\subset Q$ there exists a smooth Fano threefold $V_{22}$ of genus $12$ and a smooth conic $C\subset V_{22}$ related to $(Q,\Gamma)$ by \ref{SarkisovLinkV22}.
\end{theorem}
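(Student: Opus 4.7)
The plan is to realize the diagram as the two-ray game on $V_{22}'$, the blowup of $V_{22}$ along the conic $C$, then identify both extremal contractions explicitly, and finally run the procedure symmetrically for the converse assertion. Three substantive points must be carried out: produce the linear system on $V_{22}'$ that induces $\varphi$, perform the flop $\chi$, and match the second divisorial contraction with the blowup of a smooth quadric along a specific rational sextic.

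First I would analyze $\varphi_C\colon V_{22}'\to V_{22}$. By adjunction on $C\cong\PP^1$, the normal bundle $N_{C/V_{22}}$ has degree zero and rank two. Writing $H=-K_{V_{22}}$ and letting $E$ be the exceptional divisor, one has $-K_{V_{22}'}=\varphi_C^\ast H-E$, and I single out the divisor $D=\varphi_C^\ast H-2E$, which pulls back from $|\mathcal I_{C/V_{22}}^{2}(1)|$, i.e.\ hyperplane sections of $V_{22}\subset\PP^{13}$ double along $C$. A cohomological computation using the sequence $0\to\mathcal I_{C}^{2}\to\mathcal I_{C}\to N_{C/V_{22}}^{\vee}\to 0$ should give $h^0(\mathcal I_{C/V_{22}}^{2}(1))=5$, so that $|D|$ induces a rational map $V_{22}'\dashrightarrow\PP^{4}$. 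Computing $(\varphi_C^\ast H-2E)^3$ via the projection formula, combined with the expected degree of the map, identifies the image as a quadric hypersurface $Q\subset\PP^{4}$; its smoothness comes from the genericity of $V_{22}$.

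The induced map $V_{22}'\dashrightarrow Q$ is a small contraction, and I would show that its indeterminacy locus is precisely the disjoint union of strict transforms of the smooth rational curves on $V_{22}$ meeting $C$ in a length-two scheme. Proposition~\ref{conicV22}, which pins down the Hilbert scheme of conics as a $\PP^2$, is the main tool here: combined with a normal bundle computation, it forces these indeterminacy curves to be $(-1,-1)$-curves in $V_{22}'$, hence Atiyah-floppable to a smooth threefold $Q'$. The other extremal ray of the two-ray game on $Q'$ then gives a divisorial contraction $\varphi_\Gamma\colon Q'\to Q$. Matching Picard groups, exceptional classes, and intersection numbers with the flopped model forces the contracted locus to be a smooth rational sextic $\Gamma\subset Q$, and quadratic normality follows from a Riemann--Roch computation of $h^0(\mathcal I_{\Gamma/Q}(2))$ on $\PP^1$.

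The explicit formulas $\varphi=|\mathcal I_{C/V_{22}}^{2}(1)|$ and $\varphi^{-1}=|\mathcal I_{\Gamma/Q}^{2}(5)|$, and the identification of the contracted divisors with the unique elements of $|\mathcal I_{C/V_{22}}^{5}(2)|$ and $|\mathcal I_{\Gamma/Q}(2)|$, are then read off by tracing the strict transforms of $E$ and of the exceptional divisor of $\varphi_\Gamma$ through the flop $\chi$. The converse statement is obtained by running the symmetric procedure starting from $(Q,\Gamma)$: blow up $\Gamma$, perform the inverse flop along the rational curves arising from the relevant quintic linear system, and contract the resulting exceptional divisor to reconstruct $V_{22}$ with its conic $C$. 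The main obstacle is the flop: one must prove that the indeterminacy of $\varphi$ is exactly a disjoint union of smooth $(-1,-1)$-curves and that the flopped threefold is smooth, and this is where the rigidity of conics on $V_{22}$ provided by Proposition~\ref{conicV22} and the careful control of normal bundles do the essential work.
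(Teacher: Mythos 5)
This statement is not proved in the paper at all: it is imported verbatim from Kuznetsov--Prokhorov \cite[Theorems 2.2 and 2.6]{KPEpi} (going back to Takeuchi \cite{Tak89}), so there is no in-paper argument to compare against. Your sketch follows the same strategy as those sources --- the two-ray game on $\mathrm{Bl}_C V_{22}$ realizing a Sarkisov link --- so at the level of architecture it is the right proof. A few concrete points in your write-up would not survive being made precise, though. First, $(\varphi_C^*H-2E)^3=22+12\cdot(-2)=-2$ on $V_{22}'$, so this number cannot by itself ``identify the image as a quadric''; the degree of the target is only visible on the flopped model $Q'$, i.e.\ you must first count the flopping curves and apply the cubic-form correction $(D^+)^3=D^3-\sum(D\cdot \ell_i)^3$. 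Second, your description of the flopping locus is off: a curve of degree $d$ meeting $C$ in length $m$ has $(H-2E)$-degree $d-2m$, so the lines of $V_{22}$ meeting $C$ transversally in \emph{one} point (degree $-1$) are exactly the curves that get flopped, whereas ``rational curves meeting $C$ in a length-two scheme'' excludes them. Correspondingly, Proposition~\ref{conicV22} (the Hilbert scheme of \emph{conics} is $\PP^2$) is not the input that controls the flop; what is needed is finiteness of the lines meeting $C$ and control of their normal bundles, which comes from the structure of the Hilbert scheme of lines on a general $V_{22}$ (cf.\ \cite{KPS}). Finally, the converse is not obtained by pure symmetry: one must run the link from $\mathrm{Bl}_\Gamma Q$ with the system $|5H_Q-2E_\Gamma|$ and verify it terminates in a \emph{smooth} genus-$12$ Fano threefold, and this is precisely where the quadratic normality of $\Gamma$ enters as a hypothesis rather than as a conclusion. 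None of this invalidates the plan, but these are the steps where the actual content of \cite{KPEpi} lies.
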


Let $\cH$ be the quotient of the Hilbert scheme of plane conics in $\PP^{13}$ under the action of PGL$(14)$ and let $\cH'$ be the quotient of the Hilbert scheme of rational sextic curves $\Gamma\subset \PP^4$ under the action of PGL$(5)$. Note that the general element in $\cH'$ is quadratically normal. 
Let us consider the following incidence correspondences 
$$I_\Gamma=\overline{\{(\Gamma, Q)\;:\; \Gamma \subset  Q\subset \PP^4 \}}\subset \cH'\times \PP(H^0(\O_{\PP^4}(2)))/\text{PGL}(5)$$ 
and 
$$I_C=\overline{\{(C, V_{22})\;:\; C \text{ smooth and }C \subset  V_{22}\subset \PP^{13} \}}\subset \cH\times \mathcal M_{Fano,12}.$$ 

Then $I_\Gamma$ is birational to a $\PP^1$-bundle over $\cH'$ and hence, a $8$-dimensional unirational variety. Similarly, by Proposition \ref{conicV22} $I_C$ is birational to a $\PP^2$-bundle over $\cH$ and thus, a $8$-dimensional unirational variety. Using a relative MMP one can extend Theorem \ref{V22} 
to smooth families. We get the following proposition. 

\begin{proposition}
 The incidence correspondences $I_\Gamma$ and $I_C$ are birational. In particular, there is an explicit unirational method to construct Fano threefolds of genus $12$.
\end{proposition}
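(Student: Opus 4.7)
The plan is to promote the Sarkisov link of Theorem \ref{V22} to a birational correspondence between $I_C$ and $I_\Gamma$, and then to read off unirationality from the already-known unirationality of one side. Pointwise, the theorem produces, for a general $(C,V_{22})\in I_C$, a uniquely determined pair $(\Gamma,Q)\in I_\Gamma$, and conversely; since both varieties are irreducible of dimension $8$, it suffices to globalize these constructions to mutually inverse rational maps on dense open subsets.

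First, to construct $\Phi : I_C \dashrightarrow I_\Gamma$, I would restrict to an open subset $U\subset I_C$ parametrizing smooth pairs for which Theorem \ref{V22} applies, and form the universal family $\mathcal V\to U$ with its universal conic $\mathcal C\subset\mathcal V$. Blowing up $\mathcal C$ inside $\mathcal V$ relatively over $U$ and running the relative MMP produces a flop and a divisorial contraction whose output is a smooth family $\mathcal Q\to U$ of quadric threefolds carrying a relative rational normal sextic; this yields the morphism $\Phi$. Symmetrically, the relative linear systems $|\mathcal I_{\Gamma/Q}^2(5)|$ give an inverse $\Psi : I_\Gamma \dashrightarrow I_C$, and Theorem \ref{V22} ensures that $\Phi$ and $\Psi$ are mutually inverse on a common open set, proving that $I_C$ and $I_\Gamma$ are birational.

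For the unirationality conclusion, observe that the Hilbert scheme of rational sextic curves in $\PP^4$ is dominated by the affine space of quintuples of degree-$6$ forms on $\PP^1$ modulo $\mathrm{PGL}(2)\times\mathrm{PGL}(5)\times\mathbb G_m$, hence is unirational; since $I_\Gamma$ is birational to a $\PP^1$-bundle over this Hilbert scheme (recording the pencil of quadrics through $\Gamma$), it is unirational too. The birationality established above transfers this to $I_C$, and composing with the projection $I_C\to\mathcal M_{Fano,12}$, which is dominant by Proposition \ref{conicV22}, yields the sought explicit unirational parametrization of $\mathcal M_{Fano,12}$: start from a pair $(\Gamma,Q)\in I_\Gamma$, compute the linear system $|\mathcal I_{\Gamma/Q}^2(5)|$, and take its image to recover $V_{22}\subset\PP^{13}$.

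The main obstacle is the relative Sarkisov link itself: one must verify that the flop $\chi$ and the divisorial contraction $\varphi_\Gamma$ in diagram (\ref{SarkisovLinkV22}) extend flatly in families over $U$. This amounts to controlling the cohomology groups that govern the exceptional loci in fibers, which can be arranged by shrinking $U$ via semicontinuity so that the relevant dimensions are locally constant; the absolute statement of Theorem \ref{V22} then globalizes formally. Once this technical point is settled, the remaining verifications -- irreducibility of both incidence varieties, unirationality of $\cH'$, and dominance of the forgetful map $I_C\to\mathcal M_{Fano,12}$ -- are routine.
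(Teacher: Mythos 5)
Your proposal follows essentially the same route as the paper: the paper also obtains the birationality of $I_C$ and $I_\Gamma$ by extending the pointwise Sarkisov link of Theorem~\ref{V22} to families via a relative MMP, and deduces the explicit unirationality from the fact that $I_\Gamma$ is birational to a $\PP^1$-bundle (the pencil of quadrics through a general, quadratically normal rational sextic) over the unirational quotient $\cH'$. You spell out the relative-MMP globalization and the unirationality of the Hilbert scheme of rational sextics in more detail than the paper, which treats both as brief remarks, but the argument is the same.
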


\begin{corollary}
The moduli space of $\Lambda_{12}^{2,-2}$-polarized K3 surfaces 
(that is, a polarized K3 surface of genus $12$ containing a conic) is unirational. 
\end{corollary}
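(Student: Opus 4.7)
The plan is to combine the preceding proposition (unirationality of the incidence variety $I_C$ of pairs $(V_{22},C)$ with $C$ a smooth conic) with the general construction recipe sketched in the ``$K3$ surfaces containing a rational curve'' paragraph of this section. First, over the $8$-dimensional unirational base $I_C$, form the projective bundle $\mathcal{J}\to I_C$ whose fibre over $(V_{22},C)$ is the linear system $\mathbb{P}(H^0(\mathcal{I}_{\overline{C}/\mathbb{P}^{13}}(1)))$ of hyperplanes in $\mathbb{P}^{13}$ containing the plane $\overline{C}\cong\mathbb{P}^2$ spanned by the conic. This fibre is a $\mathbb{P}^{10}$, so $\mathcal{J}$ is a unirational variety of dimension $18$.

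Second, I would verify that a general point of $\mathcal{J}$ corresponds to a triple $(V_{22},C,S)$ where $S=V_{22}\cap H$ is a smooth K3 surface containing $C$. This is a Bertini-type check: the base locus of the linear system consists of $\overline{C}\cap V_{22}$, which for general $(V_{22},C)$ meets $V_{22}$ in a dimension-$1$ scheme that is moved away by a generic hyperplane outside $C$ itself. The resulting polarization $L$ on $S$ satisfies $L^2=22$; since $C\cong\mathbb{P}^1$, adjunction on a K3 gives $C^2=-2$; and $L\cdot C=\deg C=2$. Hence $\{L,C\}$ spans a sublattice of $\mathrm{Pic}(S)$ isometric to $\Lambda_{12}^{2,-2}$, giving a classifying rational map $\mathcal{J}\dashrightarrow \mathcal{F}^{\Lambda_{12}^{2,-2}}$.

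Third, I would prove this map is dominant; since target and source both have dimension $18$, it is enough to establish generic finiteness of the fibre. Over a general $(S,L,C)$ the conic is rigid because $C^2=-2$ forces $|C|=\{C\}$, so only the extension $S\hookrightarrow V_{22}$ has to be pinned down. By Mukai's theorem, a Brill--Noether general K3 of genus $12$ is cut out of a unique prime Fano threefold $\Sigma_{12}\subset\mathbb{P}^{13}$ of genus $12$, which recovers $V_{22}$ (up to projective equivalence) from $(S,L)$.

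The main obstacle is exactly the Brill--Noether generality of a general $(S,L)\in\mathcal{F}^{\Lambda_{12}^{2,-2}}$, since Mukai's result is only available in that locus. In principle one can verify this directly from the intersection form $\Lambda_{12}^{2,-2}$ by ruling out effective decompositions $L=M+N$ with $h^0(M)h^0(N)\ge h^0(L)=13$; but in the spirit of the paper it is cleaner to exhibit, using the \emph{Macaulay2} implementation, one explicit hyperplane section of a chosen $V_{22}$ through a conic whose Picard lattice is computed to be exactly $\Lambda_{12}^{2,-2}$, and then appeal to semi-continuity of the Picard rank and openness of Brill--Noether generality to conclude dominance and hence the unirationality of $\mathcal{F}^{\Lambda_{12}^{2,-2}}$.
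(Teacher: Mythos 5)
Your proposal follows essentially the same route as the paper: the corollary is stated there as an immediate consequence of the preceding proposition (unirationality of $I_C$, via its birationality with $I_\Gamma$) combined with the recipe in the ``$K3$ surfaces containing a rational curve'' paragraph, namely taking the projective bundle of hyperplane sections of $V_{22}$ through the plane spanned by the conic. Your additional details --- the dimension count $8+10=18$ and the dominance argument via Brill--Noether generality and the uniqueness of the Fano threefold containing a given genus-$12$ section --- are correct and merely make explicit what the paper leaves implicit (compare the proof of the analogous proposition for $\cF^{\Lambda_{12}^{0,-2}}\subset\cF_{44}$, where the same uniqueness statement from Mukai is invoked).
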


\subsection{Application: K3 surfaces of genus 44}\label{K3genus44}

Let $\Lambda_{12}^{0,-2}$ be the rank $2$ lattice defined by the intersection matrix 
$$ 
\begin{pmatrix} 22 & 0 \\ 0 & -2 \end{pmatrix}
$$
with respect to an ordered basis $\{h_1,h_2\}$. Let $\cF^{\Lambda_{12}^{0,-2}}$ be the moduli space of $\Lambda_{12}^{0,-2}$-polarized $K3$ surfaces. 

\begin{remark}
As a Noether--Lefschetz divisors $\cF^{\Lambda_{12}^{0,-2}}\subset \cF_{12}$ it describes exactly those $K3$ surfaces of genus $12$ with rational double points. 
\end{remark}

We may assume that the class $2h_1 - h_2$ is big and nef or even ample in general (see \cite[VIII, Prop. 3.10]{BHPV} and note that $h_2$ is the only $(-2)$-class in $\Lambda_{12}^{0,-2}$). Since $(2h_1-h_2)^2=86$ and there is a primitive lattice embedding $\langle 86 \rangle\hookrightarrow \Lambda_{12}^{0,-2}$, the moduli space $\cF^{\Lambda_{12}^{0,-2}}$ is birational to a Noether--Lefschetz divisor in $\cF_{44}$. Therefore we write $\cF^{\Lambda_{12}^{0,-2}}\subset \cF_{44}$. 

\begin{proposition}
 The Noether--Lefschetz divisor $\cF^{\Lambda_{12}^{0,-2}}\subset \cF_{44}$ is unirational. 
\end{proposition}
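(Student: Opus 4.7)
The plan is to combine the unirational construction of prime Fano threefolds $V_{22}\subset \PP^{13}$ of genus $12$ from the previous proposition with the nodal $K3$ recipe described earlier in this section. First I would enlarge the unirational parameter space of $V_{22}$'s to incorporate a distinguished smooth point $p\in V_{22}$. Since $V_{22}$ is rational (admitting the explicit birational Sarkisov link to the smooth quadric $Q\subset\PP^4$ of Theorem~\ref{V22}), the family of pairs $(V_{22},p)$ is again unirational, of dimension $\dim \mathcal M_{Fano,12}+3=9$. I would then adjoin a hyperplane $H\in|\mathcal I_{T_p V_{22}/\PP^{13}}(1)|\cong\PP^{9}$ containing the embedded tangent space $T_p V_{22}\subset\PP^{13}$, obtaining an $18$-dimensional unirational parameter space of triples $(V_{22},p,H)$.

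Next, for general $(V_{22},p,H)$, the hyperplane section $S=V_{22}\cap H$ is smooth away from $p$ and has a single ordinary double point at $p$, by the usual Bertini argument applied to the linear system of hyperplanes tangent to $V_{22}$ at the chosen point. The minimal resolution $\pi\colon\tilde S\to S$ is a smooth $K3$ surface carrying an exceptional $(-2)$-curve $E$. Setting $h_1=\pi^{*}H|_{S}$ and $h_2=E$, one checks that $h_1^2=22$, $h_1\cdot h_2=\pi^{*}H|_{S}\cdot E=0$ and $h_2^2=-2$, so $\langle h_1,h_2\rangle\cong \Lambda_{12}^{0,-2}$ embeds primitively into $\Pic(\tilde S)$ for general choices. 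The class $L=2h_1-h_2$ then has $L^2=86$ and, by the remarks at the start of this subsection, induces a quasi-polarization of genus $44$, so the construction defines a rational map from our parameter space to $\cF^{\Lambda_{12}^{0,-2}}\subset\cF_{44}$.

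Finally, to conclude, I would compare dimensions: both the source (the $18$-dimensional space of triples $(V_{22},p,H)$) and the target ($\cF^{\Lambda_{12}^{0,-2}}$, which is $18$-dimensional by \cite{Do96}) have the same dimension, so it suffices to show that the classifying map is generically finite onto its image, equivalently that for some member $\tilde S$ of the family the Picard lattice is exactly $\Lambda_{12}^{0,-2}$. This can be verified by computing a single explicit triple $(V_{22},p,H)$ with the \emph{Macaulay2} package \emph{K3s}, after which semi-continuity of the Picard number propagates the conclusion to a Zariski open subset of the parameter space.

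The main obstacle is this last step: ruling out additional algebraic $(-2)$-classes on the general member of the family. Everything preceding is essentially a relative version of constructions already in the paper, but without an explicit example to pin down $\Pic(\tilde S)=\Lambda_{12}^{0,-2}$ one cannot exclude the possibility that the $18$-dimensional parameter space drops into a proper Noether--Lefschetz sublocus of $\cF^{\Lambda_{12}^{0,-2}}$. The computer-aided check is precisely what rules this out.
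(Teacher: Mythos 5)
Your construction of the $18$-dimensional unirational parameter space of triples $(V_{22},p,H)$ and the passage to the resolved nodal hyperplane section is exactly the paper's setup (the paper packages it as an incidence variety $I=\{(S,V_{22})\}$ whose fiber over $V_{22}$ is $V_{22}\times\PP^9$). The divergence, and the problem, is in how you establish dominance onto $\cF^{\Lambda_{12}^{0,-2}}$.

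Your claimed equivalence ``the classifying map is generically finite onto its image $\Leftrightarrow$ some member has Picard lattice exactly $\Lambda_{12}^{0,-2}$'' is false in the direction you need. Exhibiting one member with Picard rank $2$ shows (by specialization) that the general member of your family has Picard lattice exactly $\Lambda_{12}^{0,-2}$, and hence that the image is not contained in any deeper Noether--Lefschetz locus; but it does not bound the fibers of the classifying map, nor does it prevent the image from being a proper ($17$-dimensional, say) subvariety of $\cF^{\Lambda_{12}^{0,-2}}$ that simply is not a Noether--Lefschetz locus. The jumping locus inside $\cF^{\Lambda_{12}^{0,-2}}$ is only a countable union of divisors, so a non-dominant $18$-dimensional family can perfectly well have rank-$2$ general member. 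To close this you would need either a tangent-space/normal-bundle computation at an explicit point showing the differential of the classifying map has rank $18$ (the analogue of the semicontinuity argument of Section~\ref{parameterCountGenus345}), or the input the paper actually uses: Mukai's theorem that a Brill--Noether general (nodal) K3 surface of genus $12$ embeds into a \emph{unique} $V_{22}$, via the unique stable rank-$3$ bundle with $7$ sections. That uniqueness makes the projection $(S,V_{22})\mapsto S$ birational, which simultaneously gives generic finiteness and surjectivity onto the Hilbert scheme of nodal genus-$12$ K3 surfaces, hence dominance onto $\cF^{\Lambda_{12}^{0,-2}}$ --- no example computation or dimension comparison is needed. (As a practical aside, certifying $\Pic(\tilde S)\cong\Lambda_{12}^{0,-2}$ for an explicit surface over a finite field is itself a nontrivial task, not a routine output of the construction.)
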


\begin{proof}
 Let $\cH$ be the Hilbert scheme of nodal $K3$ surface of genus $12$ in $\PP^{12}$.
 Let us consider the incidence correspondence:
 $$
 \xymatrix{
  I=\{(S, V_{22})\;:\; S \mbox{ nodal and } S \subset V_{22} \}\subset \cH/\text{PGL}(13)\times \mathcal M_{Fano,12} \ar[d]_{p_1}\ar[dr]^{p_2}&\\
 \cH/\text{PGL}(13) & \mathcal M_{Fano,12} }
 $$
 By \cite{mukaiSugaku} 
 the first projection $p_1$ is birational (there exists an unique stable rank $3$ bundle with $7$ global sections which induces the embedding $S\subset V_{22}$). For a general Fano threefold $V_{22}$, the fiber of $p_2$ is unirational. Indeed, 
 $$(p_2)^{-1}(V_{22}) = \{(p, H)\;:\; p\in V_{22}, H\supset T_p(V_{22})\} = V_{22}\times \PP^{9} \cong \PP^{12},$$ 
 since $V_{22}$ is rational. Hence, $I$ and $\cH/\text{PGL}(13)$ is rational. Finally, the quotient $\cH/\text{PGL}(13)$ is birational to $\cF^{\Lambda_{12}^{0,-2}}$ by sending a nodal $K3$ surface to its desingularization.
\end{proof}

\begin{remark}
 Note that $\cF_{44}$ is not unirational since its Kodaira dimension is non-negative by \cite{GHS07}.
\end{remark}

\begin{remark}
 Let $S\in \cF^{\Lambda_{12}^{0,-2}}$ be a $K3$ surface. Let $H$ and $C$ be the classes in $\Pic(S)$ with $H^2=22$, $H.C=0$ and $C^2=-2$. The polarization of genus $44$ is given by $L = 2H-C$. Then $S\subset \PP^{44}$ is not Brill--Noether general, since $h^0(L)<h^0(H)\cdot h^0(H-C)$. 
\end{remark}

\newpage

\section{Explicit construction of general K3 surfaces of genus 11}\label{K3genus11}

In \cite{HoffSta}, the explicit equations of a $K3$ surface of genus $11$ containing a conic was the starting point to construct a new family of rational Gushel--Mukai fourfolds. A detailed study of the geometric connection of this family of Gushel--Mukai fourfolds and their associated $K3$ surfaces allows us to present an algorithm to compute the general $K3$ surface of genus $11$. In \cite{RS3} it is presented a construction of associated $K3$ surfaces to cubic fourfolds. We apply a similar method to our family of Gushel--Mukai fourfolds. 

\begin{theorem}
 The moduli space $\cF_{11}$ of $K3$ surfaces of genus $11$ is explicitly unirational. 
\end{theorem}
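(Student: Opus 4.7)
The plan is to adapt to Gushel--Mukai fourfolds the strategy of \cite{RS3}, which for cubic fourfolds of discriminant $26$ gives an explicit construction of the associated $K3$ surface of genus $14$. The input is the unirational family of rational Gushel--Mukai fourfolds $Y$ constructed in \cite{HoffSta}, together with the embedded $K3$ surface of genus $11$ containing a conic that is used there as the starting ingredient. In particular this family, together with its defining equations, is parametrized explicitly in \emph{Macaulay2}.

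The key step is to produce, from $Y$, the $K3$ surface $S$ of genus $11$ that is Hodge-theoretically associated to $Y$ (which will in general differ from the $K3$ used to build $Y$). Following the pattern of \cite{RS3}, I would identify a natural auxiliary geometric object attached to $Y$ --- for instance an incidence variety of lines or conics, or the image of an explicit rational map built using the contained $K3$ and the Sarkisov-type link underlying the rationality of $Y$ --- whose defining data yield the equations of $S\subset\PP^{11}$. The explicit birational model of $Y$ provided in \cite{HoffSta} makes it plausible that such an auxiliary object can be written down, because the rationality construction there furnishes concrete maps out of $Y$ in which a locus birational to a $K3$ surface can be isolated.

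Next, I would check that the resulting assignment $Y\mapsto S$ from the parameter space of the family to $\cF_{11}$ is dominant. Since $\dim\cF_{11}=19$, this reduces to verifying in a single computer-algebra example that the produced $S$ is smooth of genus $11$ and has Picard rank one, hence is a general member of $\cF_{11}$; semi-continuity then propagates the conclusion to an open subset of the family. Combined with the unirationality of the parameter space from \cite{HoffSta}, this yields the explicit unirationality of $\cF_{11}$.

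The main obstacle is the second step: turning the abstract Hodge-theoretic association $Y\mapsto S$ into a construction that can be executed on given equations of $Y$. For cubic fourfolds the Fano variety of lines provides a canonical candidate, whereas for Gushel--Mukai fourfolds the correct analogue is not canonical and must be tailored to the specific geometry of the family of \cite{HoffSta}; one must then verify computationally that the output is indeed a smooth projective $K3$ surface of genus $11$ rather than some degeneration, and that its polarization has the expected self-intersection $20$.
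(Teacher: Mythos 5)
Your overall strategy coincides with the paper's: start from the unirational family of rational Gushel--Mukai fourfolds of \cite{HoffSta} and extract the Hodge-theoretically associated genus-$11$ K3 surface by adapting the method of \cite{RS3}. However, the step you yourself flag as ``the main obstacle'' --- an explicit recipe that takes the equations of the fourfold and outputs equations of the associated K3 --- is exactly where the content of the theorem lies, and your proposal leaves it at the level of plausibility (``I would identify a natural auxiliary geometric object\dots''). The paper resolves it concretely. A general $X\in\mathcal{M}_{20}$ is a quadratic section of $\YY=\GG(1,4)\cap\PP^8$ through a rational surface $S$ of degree $9$ and sectional genus $2$ admitting a congruence of $3$-secant conics; the cubics of $\YY$ singular along $S$ give a birational map $\mu|_X\colon X\dashrightarrow\PP^4$, and the base locus of its inverse is a surface $U\subset\PP^4$ which is a triple projection followed by a simple projection of the minimal K3 surface $\tilde U\subset\PP^{11}$ of genus $11$ and degree $20$. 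To undo the projections one must locate the exceptional line $L$ and exceptional twisted cubic $C$ on $U$; the key observation (imported from \cite{RS3}) is that $L\cup C$ depends only on $S$ and not on $X$, so it is computable as the top-dimensional components of $U\cap U'$, where $U'$ is the analogous surface for a second quadratic section $X'\supset S$. Then $\tilde U$ is recovered as the image of (a desingularization of) $U$ under the linear system $|H+3C+L|$. Without some such device for finding $L$ and $C$, the passage from $U$ back to $\tilde U$ cannot be executed, and your construction stalls precisely there.

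A secondary issue concerns dominance. You propose to certify it by checking in one example that the output K3 has Picard rank one. That is a transcendental condition which is genuinely difficult to verify by computer algebra (it requires point counts over several primes together with Artin--Tate/van Luijk-type arguments), and it is also unnecessary: the paper instead invokes the Hodge theory of Gushel--Mukai fourfolds (\cite{DIM}, \cite{BrakkePertusi}), by which the associated K3 surface of a general member of the $23$-dimensional divisor $\mathcal{M}_{20}$ is a general point of the $19$-dimensional moduli space $\cF_{11}$; dominance of the induced map from the unirational $23$-dimensional incidence variety $I=\{(S,X):S\subset X\}$ then follows without any Picard-rank computation.
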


In particular, we have an algorithm to write down the equations of the general $K3$ surface of genus $11$. 
Already in \cite{Mukg11}, Mukai showed the unirationality of $\cF_{11}$. The unirationality of the moduli space of $n$-pointed $K3$ surfaces of genus $11$ is shown by \cite{FV18} for $n=1$ and by \cite{Ba18} for $n\le 6$. 
Mukai's indirect proof of the unirationality is based on a non explicit birational embedding of the moduli space $\mathcal M_{11}$ of curves of genus $11$ in the Mukai correspondence $\mathcal{P}_{11}$ over $\cF_{11}$.   

We explain our precedure and therefore, recall some results from \cite{HoffSta}. 
Let $\YY=\mathbb{G}(1,4)\cap\PP^8\subset\PP^8$ be a smooth hyperplane section of 
the Grassmannian $\mathbb{G}(1,4)\subset\PP^9$ 
parameterizing lines in $\PP^4$.
In \cite{RS3,HoffSta}, it has been shown that there exists 
an irreducible, unirational, $25$-dimensional family 
$\mathcal{S}\subset\mathrm{Hilb}_{\YY}$ 
of rational surfaces of degree $9$ and sectional genus $2$ 
whose class in $\mathbb{G}(1,4)$ is given by $6\sigma_{3,1}+3\sigma_{2,2}$.
The family $\mathcal{S}$ is explicitly described by an algorithm which is implemented in  
the \emph{Macaulay2} package \emph{SpecialFanoFourfolds} \cite{SpecialFanoFourfoldsSource}. In particular,
we are able to get equations for a surface $S\subset \YY$
corresponding to a general point $[S]\in \mathcal S$.

The Zariski-closure of the family of quadratic sections 
of $\YY$, also known as \emph{(ordinary) Gushel-Mukai fourfolds}, that contain some surface of the family $\mathcal S$
describes a hypersurface in the 
$39$-dimensional projective space $\PP(H^0(\mathcal{O}_\YY(2)))$
of all quadratic sections of $\YY$.
By passing to the quotient modulo
the action of $\mathrm{Aut} \YY$,
this hypersurface gives rise to a Noether--Lefschetz divisor $\mathcal{M}_{20}$
in the $24$-dimensional moduli space $\mathcal{M}$ of Gushel-Mukai fourfolds;
see \cite[Theorem~3.3]{HoffSta}, and see \cite{DIM}
for generalities on Gushel-Mukai fourfolds.

Let $S\subset \YY$ be a general surface of the family $\mathcal S$.
Then $S$ admits inside $\YY$ 
a congruence of $3$-secant conics, 
that is, through the general point of $\YY$ 
there passes a unique conic contained in $\YY$ and that 
cuts $S$ at three points. Furthermore,
the linear system of cubic hypersurfaces in $\YY$ 
with double points along $S$ 
gives a dominant rational map 
\[
 \mu:\YY\dashrightarrow \PP^4
\]
such that its general fibers 
are the conic curves of the congruence to $S\subset \YY$; see \cite{HoffSta}.

Let $X\subset\PP^8$ be a general quadratic section of $\YY$ through $S$,
hence giving rise to a general Gushel-Mukai fourfold $[X]\in\mathcal{M}_{20}$.
The restriction of $\mu$ to $X$ 
gives a birational map $\mu|_X:X\dashrightarrow\PP^4$,
and from this we deduced in \cite[Theorem~3.4]{HoffSta} 
that the Gushel-Mukai fourfolds in $\mathcal{M}_{20}$ are rational.

The inverse map 
$$(\mu|_X)^{-1}:\PP^4\dashrightarrow X$$
is defined by the linear system 
$|H^0(\mathcal{I}_{U,\PP^4}^2(9))|$
of hypersurfaces of degree $9$ 
with double points along a certain irreducible surface $U\subset\PP^4$.
It turns out that 
$U$ can be realized as  a triple projection
followed by a simple projection of a minimal K3 surface $\tilde{U}\subset\PP^{11}$ of degree 
$20$ and genus $11$.
In particular,
$U$ has four apparent double points and contains an \emph{exceptional} line $L$ and an
\emph{exceptional} twisted cubic curve $C$.
Since $\tilde{U}$ 
is
the \emph{associated K3 surface} to a general Gushel-Mukai fourfold 
in $\mathcal{M}_{20}$ (see \cite{DIM} and see also \cite{BrakkePertusi}), we have 
that $\tilde{U}$ gives rise to a general point 
in the moduli space $\mathcal{F}_{11}$ 
of polarized K3 surfaces of genus $11$ (and degree $20$).
Our goal is to determine (explicit) equations for $\tilde{U}\subset\PP^{11}$. 

First of all, we remark that 
if we have equations 
for $U$, $L$, and $C$, 
then using standard methods 
we are able to get a birational map $f:U\dashrightarrow\tilde{U}$,
and hence equations for $\tilde{U}$ with two marked points, that is, the constractions of $L$ and $C$. Indeed, the map $f$ can be roughly defined by the linear system
$|H+3C+L|$, where $H$ denote the hyperplane section class (for this step,
we also need to desingularize $U$ by blowing up its singular locus).
Moreover, if we have equations 
for $S$ and $X$ we can also determine equations 
for $U$ (by computing the base locus of the inverse $\mu|_X$ induced by $S$). Thus we only needs to determine the curves $L$ and $C$ in $U$.

A key remark done in \cite{RS3} in the contest of cubic fourfolds
applies also in our case of Gushel-Mukai fourfolds:
\emph{the surface $U$ depends on the pair $(S,X)$,
but its exceptional curves $L$ and $C$ only depend on $S$ but not on $X$.}
In fact, the locus $L\cup C\subset\PP^4$ can be also described 
as the closure of the locus of points 
$p\in\PP^{4}$ such that the fiber $\overline{\mu^{-1}(p)}$ has 
dimension at least $2$.
In practice, we can obtain the curves $L$ and $C$ 
as the top components 
of the intersection $U\cap U'$,
where $U'\subset\PP^4$ is the 
corresponding surface 
to another 
general quadratic section $X'$ of $\YY$ through $S$,
that is, $U'$ is the surface that determines the 
inverse map of the restriction of $\mu$ to $X'$.

In summary, let 
$$
I=\{(S,X)\ | \ S\subset X\}\subset \mathcal{S}\times \PP(H^0(\mathcal{O}_{\YY}(2))) /\mathrm{PGL}(9)
$$
be the $23$-dimensional unirational incidence variety of pairs consisting of surfaces of degree $9$ and sectional genus $2$ and a Gushel--Mukai fourfold $X$ (note that $\mathrm{PGL}(9)$ acts diagonally). 
We get a rational map $\varphi: I \dashrightarrow \cF_{11}$ which maps a pair $(S,X)$ to a $K3$ surface of genus $11$. 

We have implemented this construction in our \emph{Macaulay2} package \emph{K3s}; 
see also the function \emph{associatedK3surface} in the package \emph{SpecialFanoFourfolds}.

\section{The unirationality of the moduli space of K3 surfaces - an overview}\label{knownResults}

We collect unirationality results concerning the moduli space $\cF_g$ of (quasi)-polarized $K3$ surfaces of genus $g$ in the following table.

The Kodaira dimension $\kappa(\cF_g)$ is non-negative for $g\in \{41,43,44,47\}$ or $g\ge 49$, and $\cF_g$ is of general type for $g\in \{47,51,55,58,59,61\}$ or $g\ge 63$ by \cite{GHS07}.
Our goal is to find an implementation of the remaining unirationality constructions mentioned in the above table (that is, $g\in \{13, 16, 18, 20\}$). 

The rationality of the universal $K3$ surface over $\cF_8$ is shown in \cite{DiTullio}.

\begin{center}
\begin{tabular}{|c|l|l|}
\hline
genus $g$ & paper & method of proof \\ \hline \hline
$\begin{matrix} 2\\ \vdots  \\ 5 \end{matrix}$ & classicaly known & complete intersection in projective space \\ \hline 
$\begin{matrix} 6\\ \vdots  \end{matrix}$ & \cite{Muk88} & complete intersection w.r.t. vector bundles \\ 
10 & & on homogeneous spaces \\ \hline
11& \cite{Mukg11} & higher rank Brill--Noether theory on curves\\ \hline
12& \cite{mukai-biregularclassification}, \cite{MukaiFano3folds}, \cite{Mukg13} & c.i.  w.r.t. homogeneous vector bundles \\ \hline
13& \cite{Mukg13} & c.i.  w.r.t. homogeneous vector bundles  \\ \hline
   & \cite{Nuer},  & connection to cubic fourfolds $\mathcal C_{26}$\\ 
14 & \cite{FV18} & $\cF_{14,1}$ is birational to a $\PP^{12}$-bundle over the \\ 
 & & Hilbert scheme of $3$-nodal septic scrolls in $\PP^5$ \\ \hline 
15& not known \\ \hline
 & & c.i. w.r.t. an almost homogeneous \\ 
16 & \cite{Mukg16} & vector bundle of rank 10 in a certain  \\ 
 & & compactified moduli space of twisted cubics \\ \hline
17& announced in \cite{Mukg17} & higher rank Brill--Noether theory on curves \\ \hline
18& \cite{Mukg1820} & c.i. w.r.t. homogeneous vector bundles \\ \hline
19& not known \\ \hline
20& \cite{Mukg1820} & c.i. w.r.t. homogeneous vector bundles \\ \hline
  & \ & connection to cubic fourfolds $\mathcal C_{42}$\\ 
22& \cite{FV20} & $\cF_{22,1}$ is birational to a $\PP^{5}$-bundle over the \\ 
 & & Hilbert scheme of $8$-nodal nonic scrolls in $\PP^5$ \\ \hline 
\end{tabular}
\end{center}

\section{The \emph{Macaulay2} package \emph{K3s}}\label{comput}

In this section, we introduce the \emph{Macaulay2}
 package \emph{K3s} \cite{K3sSource} by referring to its documentation for more details.
 
The main function of the  package is named \texttt{K3}. 
It can be used in two different ways.

\paragraph{First way of using \texttt{K3}}  The function accepts as input a sequence $(d,g,n)$  of three integers 
(as the ones in Table~\ref{unirationalCases})
and
a coefficient ring $K$ (this last input is optional and a particular large finite field is used by default). Then it constructs a random K3 surface $S\subset\PP^g_{K}$
of genus $g$ that contains a curve $C$ of degree $d$ and self-intersection $n$,
as described previously in this paper. Hence, the curve
$C$ together with the hyperplane section class $L$ of $S$ 
span a
rank $2$ lattice $\Lambda^{d,n}_{g}$ with the intersection matrix 
$$ 
\begin{pmatrix}2g-2 & d  \\ d & n \end{pmatrix}.
$$
In this case, 
the output object looks like a function, which takes as input
a pair of integers $(a,b)$ and returns the image 
of the embedding of $S$ defined by the complete linear system $|a L + b C|$
(an error is through if $a L + b C$ is not very ample).
As one specific example, the command \texttt{(K3(5,9,0))(1,1)}
returns a K3 surface of genus $14$
and degree $26$ in $\PP^{14}$ cut out by $66$ quadric hypersurfaces and which
contains an elliptic curve of degree $9$; see also Subsection~\ref{K3genus14}.
{\footnotesize 
\begin{verbatim}
$ M2 --no-preload
i1 : needsPackage "K3s";
i2 : S = K3(5,9,0)
o2 = K3 surface with rank 2 lattice defined by the intersection matrix: | 8 9 |
                                                                        | 9 0 |
o2 : Lattice-polarized K3 surface
i3 : S(1,1)
o3 = K3 surface of genus 14 and degree 26 in PP^14
o3 : Embedded K3 surface
i4 : degrees oo -- degrees of the generators of the ideal
o4 = {({2}, 66)}
\end{verbatim}
}
\noindent
As another example, the command \texttt{(K3(11,2,-2))(2,1)} returns
a  
K3 surface of genus $44$
and degree $86$ in $\PP^{44}$ cut out by $861$ quadric hypersurfaces and 
which contains an irreducible conic; see also Subsection~\ref{K3genus44}.
{\footnotesize 
\begin{verbatim}
i5 : T = K3(11,2,-2)
o5 = K3 surface with rank 2 lattice defined by the intersection matrix: | 20  2 |
                                                                        | 2  -2 |
o5 : Lattice-polarized K3 surface
i6 : T(2,1)
o6 = K3 surface of genus 44 and degree 86 in PP^44
o6 : Embedded K3 surface
i7 : degrees oo -- degrees of the generators of the ideal
o7 = {({2}, 861)}
\end{verbatim}
}
\noindent 
On a standard laptop computer, the whole computation for the first example takes less than $2$ seconds,
while for the second example takes about $3$ minutes and half.

\paragraph{Second way of using \texttt{K3}} 
If the input is just an integer $g$ (with $3\leq g \leq 12$) and a coefficient ring $K$ (optional as before),
then  a random K3 surface of genus $g$ and degree $2g-2$ in $\PP^g_{K}$ 
is returned. 
The most relevant case is $g=11$ 
where the procedure described in Section~\ref{K3genus11} is performed, taking about  $7$ minutes.
One can enable an option \texttt{Verbose} to display more details on the steps of the computation.
{\footnotesize
\begin{verbatim}
i8 : K3 11
o8 = K3 surface of genus 11 and degree 20 in PP^11
o8 : Embedded K3 surface
\end{verbatim}
}

\begin{remark}
All of our constructions also work over $\QQ$. 
\end{remark}


\end{document}